\documentclass[12pt,letterpaper]{amsart}

\usepackage[utf8]{inputenc}
\usepackage{enumerate}%
\usepackage{amsmath,amssymb}
\usepackage{cite}
\usepackage{comment}
\usepackage{amsfonts}
\usepackage{mathrsfs}
\usepackage{xcolor}
\usepackage[colorlinks, linkcolor = blue, anchorcolor = blue, citecolor = blue]{hyperref}

\theoremstyle{definition}
\newtheorem{definition}{Definition}[section]
\newtheorem{remark}[definition]{Remark}

\newtheorem*{note}{Note}

\theoremstyle{plain}
\newtheorem{theorem}[definition]{Theorem}

\newtheorem{lemma}[definition]{Lemma}

\numberwithin{equation}{section}

\newcommand{\R}{\mathbb{R}}
\newcommand{\D}{\nabla}
\newcommand{\Om}{\Omega}
\newcommand{\ve}{\varepsilon}

\title[]{A singular profile for the relativistic heat cost\\ and the special Lagrangian curvature equation}

\author{Xiaotian Wu}
\address{School of Mathematical Sciences, Zhejiang University, Hangzhou 310058, China}
\email{xiao-tian.wu@zju.edu.cn}

\date{July 29, 2026}
\thanks{}

\subjclass[2020]{Primary 35J96, 49Q22; Secondary 35B65, 53C42, 35J60.}
\keywords{Optimal transportation, relativistic cost, Monge--Amp\`ere type equation, $c$-convexity, interior regularity, singular solution, special Lagrangian curvature equation, interior estimate.}

\begin{document}

\begin{abstract}
We study the interior regularity of generalized solutions to the Monge--Amp\`ere type
equation governing optimal transportation for the relativistic cost
$c(x,y)=\sqrt{a^2-|x-y|^2}$ on $\R^n$.
We construct an explicit one-parameter family of radially structured generalized solutions on a ball and exhibit, among them, a solution that is of class
$C^{1,\frac{1}{2n-1}}$ but of no better H\"older class: it fails to belong to
$C^{1,\beta}$ for every $\beta>\frac{1}{2n-1}$.
The construction reduces the equation to a planar autonomous system whose phase variable
$s=\dot r$ vanishes to order $(2n-1)$ in the base variable.
As an application, in dimension two we transfer the construction to the special Lagrangian
curvature equation: for every phase $\Theta\in(0,\pi/2)$ we produce a sequence of smooth
graphical solutions converging uniformly to a limit of class exactly $C^{1,1/3}$.
Consequently the two-dimensional special Lagrangian curvature equation admits no pure
interior $C^{1,\beta}$ estimate for any $\beta>\frac{1}{3}$.
\end{abstract}
\maketitle

\baselineskip16pt
\parskip3pt


\section{Introduction}

Optimal transportation seeks, given two probability measures and a cost function
$c(x,y)$ measuring the price of moving a unit of mass from $x$ to $y$, a map that
rearranges the first measure onto the second with least total cost.
Since the work of Brenier~\cite{Br91} and McCann, and the regularity theory initiated by
Caffarelli~\cite{Ca92} for the quadratic cost $c(x,y)=|x-y|^2/2$, a central theme has been
to understand how the geometry of the cost dictates the regularity of the optimal map and
of its potential.
For general costs this is governed by the Ma--Trudinger--Wang curvature tensor and the
attendant (MTW) condition introduced in~\cite{MTW05}; under (MTW) and suitable
convexity of the domains one obtains interior and global regularity
(see also Loeper~\cite{Lo09}, Trudinger--Wang~\cite{TW09}, and the
monographs~\cite{Vi09,Fi17}), whereas its failure typically permits singular optimal maps.

In this paper we study the relativistic heat cost.
Fix a positive number $a$ and define $c:\R^n\times\R^n\to\R\cup\{-\infty\}$ by
\begin{equation}\label{eq:cost}
    c(x,y):=\left\{\begin{aligned}
    \sqrt{a^2-|x-y|^2},\quad &\text{if $|x-y|<a$,}\\
    -\infty,\quad &\text{if $|x-y|\geq a$.}
    \end{aligned}
    \right.
\end{equation}

The finite cutoff at distance $a$ encodes a finiteness-of-speed constraint: only mass within
a fixed range $a$ may be transported.
Costs of this type were introduced by Brenier in his relativistic heat
equation~\cite{Br03}, where the Lagrangian $\sqrt{a^2-|v|^2}$ caps the transport velocity by
the constant $a$, and the corresponding optimal transport problem was developed by
McCann--Puel~\cite{MP09} and Bertrand--Puel~\cite{BP13}.

Let us record the equation. Let $u$ be a $C^1$ smooth $c$-convex function and $T_u$ be the $c$-normal map associated to $u$ (see Section~\ref{sec:prelim} for the precise definitions).
One checks directly from \eqref{eq:cost} that
\begin{equation}\label{eq:T_u}
    T_u(x)=x+\frac{a\D u(x)}{\sqrt{1+|\D u|^2}}.
\end{equation}
Then the prescribed Jacobian equation for $T_u$ with constant positive right-hand side $\lambda$ reads
\begin{equation}\label{eq:ot-pde}
  \det\left(
    \frac{a}{\sqrt{1+|\D u|^2}}
    \left(I-\frac{\D u\otimes\D u}{1+|\D u|^2}\right)D^2u+I
  \right)=\lambda.
\end{equation}
Equation~\eqref{eq:ot-pde} is the Monge--Amp\`ere type equation expressing that $u$ pushes
Lebesgue measure forward to $\lambda$ times Lebesgue measure under the relativistic optimal
map. When the cutoff $a$ is sent to $+\infty$ it degenerates, after rescaling, to the
classical Monge--Amp\`ere equation $\det D^2 u=\lambda$; the finite value of $a$ is exactly
what makes the operator non-uniformly elliptic near $|\D u|=\infty$ and is responsible for the
singular phenomenon below.

Our first result shows that, even for smooth right-hand side, generalized solutions
of~\eqref{eq:ot-pde} need not be better than $C^{1,\frac{1}{2n-1}}$, and that this exponent is
attained.

\begin{theorem}\label{thm:A}
    For every $n\geq2$ there is a generalized solution $u$ of \eqref{eq:ot-pde} on a ball
    $B\subset\R^n$ that belongs to $C^{1,\frac{1}{2n-1}}(B)$ but not to $C^{1,\beta}(B)$ for any
    $\beta>\frac{1}{2n-1}$.
\end{theorem}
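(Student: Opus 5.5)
The plan is to look for $u$ within a radially structured ansatz, so that \eqref{eq:ot-pde} collapses to an ODE system, and then to read off the sharp H\"older exponent from the local behaviour of one trajectory of that system. Concretely, I will take $u$ whose level sets (equivalently, whose $c$-normal map $T_u$) are organised along spheres, described by a radius function $r$ of a base variable $t$. By \eqref{eq:T_u}, the radial component of $T_u$ is $\rho+a\sin\theta$ with $\tan\theta=|\D u|$, so $T_u$ moves each point along its ray. The Jacobian condition then becomes a first-order relation between $r$, $\dot r$ and the angle variable. The cutoff $a$ enters only through $\sin\theta\in(-1,1)$.

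The first step is to write the prescribed Jacobian $\lambda$ as the product of the radial stretching and the $(n-1)$ tangential stretchings $(R/\rho)^{n-1}$. Eliminating the explicit dependence on $t$ should give a planar autonomous system in $(r,s)$ with $s=\dot r$. The second step is to locate an equilibrium, or a degenerate point on an invariant curve, where the linearisation vanishes. The aim is to show, by a power-series/Newton-polygon matching of the lowest-order terms, that the relevant trajectory satisfies $s\sim c\,t^{2n-1}$, hence $r-r_0\sim c' t^{2n}$.

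The exponent $2n-1$ should come from the tangential factor $(R/\rho)^{n-1}$ combined with the square-root degeneracy of $\sqrt{1-\sin^2\theta}$. I would verify this by substituting $s=t^{k}(c+o(1))$ and checking that $k=2n-1$ is the only exponent balancing the leading terms. Existence of the trajectory with exactly this expansion then follows from a standard invariant-manifold argument at a non-hyperbolic point, or from a fixed-point argument for $s/t^{2n-1}$. I expect this local analysis, and the rigorous justification that the chosen orbit really has the stated leading order rather than a faster one, to be the main obstacle.

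Next, the third step translates back to $u$. Since $\D u$ is determined by $\sin\theta$, and the base variable relates to $x$ through $r$, inverting $r-r_0\sim t^{2n}$ gives $\D u$ as a function of $x$ behaving like a constant plus $c''|x-x_0|^{1/(2n-1)}$ along a transversal direction, and smooth elsewhere. This yields $u\in C^{1,\frac1{2n-1}}(B)$. Along that direction the difference quotient $|\D u(x)-\D u(x_0)|/|x-x_0|^{\beta}$ blows up for $\beta>\frac1{2n-1}$, so $u\notin C^{1,\beta}(B)$.

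Finally, the fourth step checks that $u$ is a generalized solution on a small ball $B$ around the singular set. I would first verify $c$-convexity by exhibiting, at each point, a supporting $c$-function $-c(\cdot,T_u(x))+\mathrm{const}$ and using the monotonicity of the radial map together with $|x-T_u(x)|<a$. I would then show that the $c$-subdifferential is single-valued off a null set and pushes Lebesgue measure to $\lambda$ times Lebesgue measure, using the change-of-variables formula on the smooth part and the fact that the singular set has measure zero.
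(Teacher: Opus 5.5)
Your overall architecture (symmetry reduction, an ODE, local asymptotics, then verification of the Aleksandrov solution) matches the paper's. There is, however, a genuine gap, and it lies in the ansatz.

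The structure you describe is that of a radial function $u(x)=\phi(|x|)$: $T_u$ moves each point along its ray with radial component $R=\rho+a\sin\theta$, and the Jacobian is $(R/\rho)^{n-1}R'$. For such $u$, equation \eqref{eq:ot-pde} reads $(R/\rho)^{n-1}R'(\rho)=\lambda$, i.e.\ $\frac{d}{d\rho}(R^n)=n\lambda\rho^{n-1}$, so $R^n=\lambda\rho^n+C$ in closed form. Whatever base variable you use to parametrize the radius, $R^n-\lambda r^n$ is a first integral, so there is no non-hyperbolic point left to analyse. From this formula:
\begin{itemize}
\item $\nabla u$ is smooth wherever $R\neq0$ and $|R-\rho|<a$;
\item $\nabla u$ blows up where $|R-\rho|=a$;
\item the center is either a smooth point or a cone;
\item the only remaining way to lose smoothness with $\nabla u$ finite is a sphere where $R(\rho_0)=0$.
\end{itemize}
On that sphere $R\approx\pm c|\rho-\rho_0|^{1/n}$, which gives the exponent $1/n$. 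For even $n$ this holds only from one side, since $R<0$ makes $\det DT_u$ negative. Since $1/n\neq 1/(2n-1)$ for $n\geq2$, no trajectory of your system proves the theorem.

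The underlying reason is that in a radial ansatz the collapsing factor $R/\rho$ and the stretching $R'$ are the same function. The missing idea is to decouple them. The paper takes $u(x',t)=\sqrt{r(t)^2-|x'|^2}$, whose graph is a hypersurface of revolution about a \emph{horizontal} axis. Then $T_u(x',t)=(A(t)x',B(t))$ and $\det DT_u=A^{n-1}\dot B$, with $A=1-a/(r\sqrt{1+\dot r^2})$ and $B$ strictly increasing. With $r(0)=a$ and $\dot r(0)=0$, the factor $A$ touches zero without changing sign, $A\sim s^2/2$. This is a $1-\cos$ degeneracy of the kind you allude to, but it lives in $A$, not in $\rho+a\sin\theta$. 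As a result the slice $\{t=0\}$ collapses to a point while $T_u$ stays injective, and $A^{n-1}\sim 2^{1-n}s^{2n-2}$ is what produces the exponent $2n-1$.

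Your exponent bookkeeping is also reversed. With $s=\dot r$ and $t$ the spatial variable, the correct relations are $t\sim c\,s^{2n-1}$ and $r-a\sim c's^{2n}$, i.e.\ $s\sim t^{1/(2n-1)}$. This gives the exponent directly, since $\partial_tu(0,t)=s(t)$. The relations you wrote, $s\sim c\,t^{2n-1}$ and $r-r_0\sim c't^{2n}$, would make $\dot r$ vanish to high order and $u$ \emph{more} regular at $t=0$. Inverting $r-r_0\sim t^{2n}$ yields $1/(2n)$ or $(2n-1)/(2n)$, never $1/(2n-1)$.

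No invariant-manifold or fixed-point argument is needed either. Taking $s$ as the independent variable, as in \eqref{eq:ode-sys}, the right-hand side is smooth at $s=0$ because $\lambda>0$. Picard--Lindel\"of and the expansion $\mathcal{A}=s^2/2+o(s^2)$ then suffice.

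Finally, your last step is too thin in two places.
\begin{itemize}
\item The relativistic cost violates (MTW), so local supports need not be global, and $c$-convexity does not follow from monotonicity of $T_u$ alone. The paper proves the one-dimensional inequality \eqref{eq:r-support} and lifts it to the whole graph with the reverse triangle inequality of Lemma~\ref{lem:in-CS}.
\item A null singular set does not by itself give $|T_u(E)|=\lambda|E|$; you also need its image to be null and $T_u$ to be injective. In the paper $\{t=0\}$ is mapped to a point, $\mathcal{B}$ is strictly increasing, and $|T_u(E)|$ is computed exactly by slicing.
\end{itemize}
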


Here a {\it generalized solution} is an Aleksandrov solution, defined through the
Monge--Amp\`ere measure $\mu_u(E)=|T_u(E)|$ (Definition~\ref{def:gs}). The exponent
$\frac{1}{2n-1}$ is sharp and tends to $0$ as $n\to\infty$, so the gradient admits no
dimension-free interior modulus of continuity. The solution is an explicit hypersurface of
revolution $u(x',t)=\sqrt{r(t)^2-|x'|^2}$ whose meridian solves an ordinary differential
equation reduced from~\eqref{eq:ot-pde}, in the spirit of Pogorelov's singular
solutions~\cite{Po78}.

It is instructive to compare Theorem~\ref{thm:A} with the positive regularity theory for
optimal transportation, since the very same H\"older exponent $\frac{1}{2n-1}$ marks the
sharp threshold there.
Assume that the cost satisfies the (MTW) condition and that the two
densities are bounded between positive constants (but possibly discontinuous).
Loeper~\cite{Lo09} proved that the optimal map is then H\"older continuous, equivalently that
the potential belongs to $C^{1,\alpha}$, with the explicit exponent $\frac{1}{4n-1}$.
Liu~\cite{Liu09} subsequently improved this exponent to $\frac{1}{2n-1}$,
which is optimal in the class of merely bounded densities: there exist bounded densities for which the potential is exactly of class $C^{1,\frac{1}{2n-1}}$ and
no better, so the exponent cannot be raised without further hypotheses (see~\cite{Lo09,Liu09}).
Under the additional assumption that the densities are H\"older continuous or smooth, the (MTW)
theory upgrades the potential to $C^{2,\alpha}$ and to $C^\infty$ respectively, by the work of
Ma--Trudinger--Wang~\cite{MTW05}, Loeper~\cite{Lo09}, and Trudinger--Wang~\cite{TW09}.

Theorem~\ref{thm:A} reaches the same critical exponent $\frac{1}{2n-1}$ from the opposite
direction. In our example the right-hand side of~\eqref{eq:ot-pde} is not merely bounded but
\emph{constant}; were the relativistic cost to satisfy
(MTW), the Loeper--Liu theory just quoted would force the potential to be $C^\infty$. The loss
of regularity is therefore attributable entirely to the geometry of the cost: the relativistic
cost~\eqref{eq:cost} \emph{violates} (MTW) on its whole finite domain
$|x-y|<a$; see the calculation in~\cite[Section~6]{QZ26}. This failure permits a
generalized solution that, despite smooth data, is no smoother than the worst potentials
allowed by Liu's sharp exponent for rough data. Thus the exponent $\frac{1}{2n-1}$ turns out to
be critical in two independent regimes: rough data with an (MTW) cost, and smooth data with
the relativistic cost.
Theorem~\ref{thm:A} shows that, for the relativistic cost, it
cannot be improved by any amount of regularity of the right-hand side.

Our second result applies this construction to a geometric equation. Fix $\Theta\in(0,\pi/2)$
and consider the two-dimensional special Lagrangian curvature equation with phase $\Theta$,
\begin{equation}\label{eq:slce}
    \arctan\kappa_1+\arctan\kappa_2=\Theta,
\end{equation}
where $\kappa_1,\kappa_2$ are the principal curvatures of a surface $\mathcal{M}\subset\R^3$.
Introduced by Smith~\cite{S13} as the curvature analogue of the Harvey--Lawson special
Lagrangian equation~\cite{HL82}, it admits non-convex graphical solutions, and its interior regularity is
delicate. Related interior curvature estimates for three-dimensional hypersurfaces of prescribed
scalar curvature were established by Qiu~\cite{Qiu24}. For the special Lagrangian curvature
equation, Qiu and Zhou~\cite{QZ26} proved a priori interior estimates under structural
hypotheses, and their Lemma~6.1 (our Lemma~\ref{lem:ot-slce}) links~\eqref{eq:slce}, for a
suitable choice of parameters, to the relativistic equation~\eqref{eq:ot-pde}. Exploiting this
link together with Theorem~\ref{thm:A}, we realize the singular profile as a limit of smooth
solutions, in the spirit of the singular special Lagrangian solutions of Wang--Yuan~\cite{WY13}.

\begin{theorem}\label{thm:B}
    On a ball $B\subset\R^2$ there exist smooth graphical solutions $u_j$ $(j\geq1)$ of
    \eqref{eq:slce} converging in $C^0(B)$ to a limit $u_\infty$ that belongs to $C^{1,1/3}(B)$
    but not to $C^{1,\beta}(B)$ for any $\beta>1/3$. In particular, \eqref{eq:slce} admits no
    pure interior curvature estimate.
\end{theorem}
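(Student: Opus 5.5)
The plan is to go through the link with the relativistic equation, Lemma~\ref{lem:ot-slce} (Qiu--Zhou, Lemma~6.1). For $n=2$ and phase $\Theta\in(0,\pi/2)$, that lemma identifies graphical solutions of \eqref{eq:slce} with solutions of \eqref{eq:ot-pde}, with suitable parameters $a=a(\Theta)$ and $\lambda=\lambda(\Theta)$, up to a fixed Legendre-type or rotation change of variables. The map underneath is $x\mapsto x+a\nu$ (see \eqref{eq:T_u}), and it carries smooth solutions to smooth solutions. So it is enough to do the construction on the optimal transport side and then transport it back.

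\textbf{Step 1 (limit profile).} I take the $n=2$ singular solution $u$ from Theorem~\ref{thm:A}. It is the rotationally structured $u(x',t)=\sqrt{r(t)^2-|x'|^2}$, whose meridian $r$ solves the reduced planar autonomous system. Near the singular point, $s=\dot r$ vanishes to order $3$, which gives the exact $C^{1,1/3}$ behaviour. I pass it through Lemma~\ref{lem:ot-slce} and call the result $u_\infty$.

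\textbf{Step 2 (smooth approximants).} I use the one-parameter family of radially structured solutions. Perturbing the initial data of the planar system, for example starting the trajectory at $s(0)=\varepsilon_j>0$ instead of on the degenerate point, should give trajectories that never reach the degenerate point. They then yield meridians $r_j$ with $\dot r_j\neq 0$, hence smooth hypersurfaces of revolution and classical solutions $v_j$ of \eqref{eq:ot-pde} on a fixed ball. Continuous dependence on initial data for the autonomous ODE, together with uniform bounds on $r_j$ (the equation is at least locally Lipschitz away from the degenerate equilibrium, and the vector field is monotone near it), gives $r_j\to r$ uniformly. Therefore $v_j\to u$ in $C^0$, and the images $u_j$ are smooth graphical solutions of \eqref{eq:slce} with $u_j\to u_\infty$ in $C^0(B)$ after shrinking $B$.

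\textbf{Step 3 (regularity of $u_\infty$).} The change of variables in Lemma~\ref{lem:ot-slce} is a smooth diffeomorphism on the relevant region, as long as the gradients stay away from where the Legendre or normal map degenerates. So Hölder classes of the gradient are preserved: $u_\infty\in C^{1,1/3}$, and $u_\infty\notin C^{1,\beta}$ for $\beta>1/3$, by Theorem~\ref{thm:A}. The ``in particular'' then follows by contradiction. A pure interior estimate $\|u_j\|_{C^{1,\beta}(B/2)}\le C(\|u_j\|_{C^0(B)})$, or a curvature bound, would pass to the limit by Arzelà--Ascoli and put $u_\infty\in C^{1,\beta}$ (or $C^{1,1}$), which is impossible.

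\textbf{Main obstacle.} The hardest part is Steps 2--3 near the singular point. I must check that the transfer map of Lemma~\ref{lem:ot-slce} stays a uniform graph map along the approximating sequence, since the gradient of the relativistic solution may blow up (the $|\D u|=\infty$ regime) exactly where the singularity lives. To handle this I would try to work with the geometric object directly: the surface of revolution and its normal map, rather than graph functions. I would check that the transformed surfaces are graphs over a fixed ball in a suitably rotated frame, and that they converge as sets. Hölder regularity would then be read off from the parametrization by the meridian, where $s\sim t^{3}$ gives the $1/3$ exponent explicitly.
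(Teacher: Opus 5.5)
The gap is in Step 2: perturbing the slope does not remove the singularity, it only moves it. You treat the degeneracy as an isolated point $(s,R)=(0,a)$ and infer smoothness from $\dot r_j\neq0$. But \eqref{eq:ode} reads $\ddot r=a^{-1}(1+\dot r^2)^{3/2}(\lambda A^{1-n}-1)$, which is singular along the whole curve $\{A=0\}=\{r\sqrt{1+\dot r^2}=a\}$, and whether $\dot r$ vanishes plays no role.

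Take your data $r(0)=a$, $\dot r(0)=\ve_j$, and follow the trajectory of the $s$-system \eqref{eq:ode-sys} through $(s,R)=(\ve_j,a)$; that system is smooth across $\mathcal{A}=0$. We have $\mathcal{A}(\ve_j)=1-(1+\ve_j^2)^{-1/2}>0$. If $\mathcal{A}$ stayed positive on $(0,\ve_j]$, then $R'=st'>0$ there would give $R(0)<a$, i.e.\ $\mathcal{A}(0)<0$, contradicting continuity. So $\mathcal{A}(s_*)=0$ for some $s_*\in(0,\ve_j)$, where $t'(s_*)=0$. Moreover $t_*:=t(s_*)$ satisfies $|t_*|=O(\ve_j^{2n-1})$, because $0\le t'=O(\mathcal{A}(\ve_j)^{n-1})$ on $[s_*,\ve_j]$.

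Consequently $\ddot r_j=1/t'\to+\infty$ as $t\downarrow t_*$. For $n=2$ the map $s\mapsto t(s)$ even folds back at $s_*$, so $r_j$ is not defined on a neighbourhood of $t_*$ at all. Since $t_*\to0$, for large $j$ your $v_j$ is not a smooth, or even classical, solution on any fixed ball.

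The missing idea is to push the trajectory off the degenerate curve, to the side $\mathcal{A}>0$, at the point where $\mathcal{A}$ is smallest. Along any trajectory $\mathcal{A}'=(1-\mathcal{A})\,s\,\bigl(t'/R+(1+s^2)^{-1}\bigr)$, which has the sign of $s$ while $t'\ge0$. So the paper keeps $s=0$ at $t=0$ and raises $R(0)$ to $a+\ve$, as in \eqref{eq:sys-ve}. Then $\mathcal{A}_\ve\ge\ve/(a+\ve)>0$ throughout, so $t_\ve'>0$ and $r_\ve$ is smooth even though $\dot r_\ve(0)=0$. Convergence is then plain continuous dependence for the smooth $s$-system, plus inversion of $t_\ve\to t$.

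Separately, you have misread Lemma~\ref{lem:ot-slce}: it involves no Legendre or parallel-surface change of variables. The graph of the same function $u$ solves \eqref{eq:slce}. Hence $u_\infty$ is literally the $n=2$ profile, Lemma~\ref{lem:reg} applies verbatim, and your Step~3 and ``main obstacle'' disappear. Note also that $\D u$ stays bounded on $B_\eta$; only $\ddot r$ blows up. Had the transfer really been $p\mapsto p+a\nu(p)$, Step~3 would fail, since that surface collapses onto the axis in the limit (Remark~\ref{rk:geometry}). Your Arzel\`a--Ascoli argument for the final clause is fine.
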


Here {\it no pure interior curvature estimate} means that $D^2u(0)$ cannot be
controlled by $\|u\|_{C^0(B)}$ alone: the $u_j$ stay uniformly bounded
while their gradients lose $\beta$-H\"older regularity at the center. The exponent $1/3$ is the case
$n=2$ of Theorem~\ref{thm:A}, here attained as a uniform limit of {\it smooth} solutions.

This should be contrasted with the two-dimensional Monge--Amp\`ere equation. For convex
solutions of $\det D^2u=f>0$ in the plane, pure interior second-derivative estimates are
available: the classical estimate is due to Heinz~\cite{He59}, and more recent proofs were
given by Chen--Han--Ou~\cite{CHO16} and Liu~\cite{Liu20}. In those results the
interior Hessian is controlled in terms of the $C^0$ size of the solution and the prescribed
data, with no separate gradient bound. Theorem~\ref{thm:B} shows that this feature does not
persist for the special Lagrangian curvature equation: even for smooth graphical solutions
with fixed phase, uniform $C^0$ control does not prevent the loss of gradient H\"older
regularity.

The proof of Theorem~\ref{thm:A} proceeds in three steps.
First a formal calculation shows that the radial ansatz
$u(x',t)=\sqrt{r(t)^2-|x'|^2}$ reduces~\eqref{eq:ot-pde} to the scalar second order
ODE~\eqref{eq:ode} for the meridian $r$.
Imposing $r(0)=a$, $\dot r(0)=0$ forces the radial factor $\mathcal{A}=1-a/(r\sqrt{1+\dot r^2})$ to
vanish at $t=0$, which in turn forces $\ddot r$ to blow up; tracking the leading asymptotics
$\mathcal{A}\sim s^2/2$ with $s=\dot r$ yields $\dot s\sim s^{-2(n-1)}$ and hence $|s|\sim|t|^{1/(2n-1)}$,
the source of the exponent.
Second, to make this rigorous through the degeneracy at $t=0$ we recast the dynamics as a
first order autonomous system in the phase variable $s$, equations~\eqref{eq:ode-sys}, whose
right-hand side is smooth at $s=0$; the Picard--Lindel\"of theorem then produces a smooth solution, and the inversion $t\mapsto s(t)$ supplies the sharp asymptotics
in~\eqref{eq:s-asymp}.
Third, we verify that the resulting $u$ is an Aleksandrov solution (Lemma~\ref{lem:gs}): $c$-convexity is
obtained from a one-dimensional support inequality for $r$ promoted to the full graph by a reverse triangle inequality for the relativistic cost (Lemma~\ref{lem:in-CS}), and the
Monge--Amp\`ere measure is computed exactly by slicing the image of $T_u$.
Finally, the sharp $C^{1,\frac{1}{2n-1}}$ regularity of $u$ is checked in Lemma~\ref{lem:reg}.

Theorem~\ref{thm:B} is then obtained by a perturbation argument:
the autonomous system is perturbed at the initial value, as in~\eqref{eq:sys-ve}, so that each perturbed solution is everywhere {\it smooth} and, via Lemma~\ref{lem:ot-slce}, gives a
smooth graphical solution of~\eqref{eq:slce} for the parameters $a=\tan\Theta$,
$\lambda=1/\cos^2\Theta$; continuous dependence on initial data then forces the perturbed
solutions to converge in $C^0$ to the singular profile of Theorem~\ref{thm:A} with $n=2$.

Section~\ref{sec:prelim} fixes the optimal-transport vocabulary ($c$-linear and $c$-convex
functions, the $c$-normal map, generalized solutions) and proves the reverse triangle
inequality for the relativistic cost.
Section~\ref{sec:thmA} carries out the construction and proves Theorem~\ref{thm:A}.
Section~\ref{sec:thmB} performs the perturbation and proves Theorem~\ref{thm:B}.

\subsection*{Notation}
Throughout, $x=(x',t)\in\R^{n-1}\times\R$, $B_\eta(0)$ denotes the open Euclidean ball of
radius $\eta$ centered at the origin, and $|\cdot|$ denotes both Euclidean norm and Lebesgue
measure, the meaning being clear from context.
A dot denotes differentiation with respect to $t$ and a prime differentiation with respect to
the phase variable $s$.
We reserve $a>0$ for the relativistic cutoff and $\lambda>0$
for the right-hand side of~\eqref{eq:ot-pde}.

\section{Preliminaries}\label{sec:prelim}

This section fixes the notions from the theory of optimal transportation that we use,
specialized to the relativistic cost~\eqref{eq:cost}, and proves the one elementary inequality used in the verification of $c$-convexity in Section~\ref{sec:thmA}. For background on
$c$-convexity, optimal maps, and the Aleksandrov formulation of the associated
Monge--Amp\`ere equation, see~\cite{Vi09,Fi17}.
Throughout, $\Om\subset\R^n$ is a domain and $c$ is the cost~\eqref{eq:cost}.

\subsection{\texorpdfstring{$c$}{c}-convexity and the \texorpdfstring{$c$}{c}-normal map}

The role played by affine functions in classical convexity is played here by the translates of
the cost.

\begin{definition}\label{def:c-linear}
    A function $\ell$ is {\bf $c$-linear} if $\ell(\cdot)=c(\cdot,y)+b$ for some $y\in\R^n$ and
    $b\in\R$.
\end{definition}

A $c$-linear function is thus a vertical translate of a single sheet $c(\cdot,y)$ of the cost,
which for the relativistic cost~\eqref{eq:cost} is a lower hemisphere of radius $a$ centered at
$(y,0)$. Replacing supporting hyperplanes by supporting $c$-linear functions yields the notion
of $c$-convexity.

\begin{definition}\label{def:c-convex}
    A lower semi-continuous function $\phi$ on $\Om$ is {\bf $c$-convex} in $\Om$ if for every
    $x_0\in\Om$ there is a $c$-linear function $\ell$ that supports $\phi$ from below at $x_0$,
    that is, $\ell\leq\phi$ in $\Om$ and $\ell(x_0)=\phi(x_0)$. Such an $\ell$ is called a
    {\bf $c$-support} of $\phi$ at $x_0$.
\end{definition}

The collection of points $y$ that generate supporting $c$-linear functions at a given $x_0$
is the analogue of the subdifferential of a convex function; recording it defines the
$c$-normal map, which plays the role of the gradient map.

\begin{definition}[$c$-normal map]\label{def:c-normal}
    Let $\phi$ be $c$-convex on $\Om$. Its {\bf $c$-normal map} is the set-valued map
    $T_\phi:\Om\rightrightarrows\R^n$ assigning to $x_0\in\Om$ the set of all $y_0\in\R^n$ for
    which $c(\cdot,y_0)+b$ is a $c$-support of $\phi$ at $x_0$ for some $b\in\R$.
\end{definition}

\begin{remark}\label{rk:c-normal}
    If a $c$-convex function $\phi$ is $C^1$, then $T_\phi$ is single-valued and is determined
    by the first-order matching condition $\D_xc(x,T_\phi(x))=\D\phi(x)$. For the relativistic
    cost~\eqref{eq:cost} this is exactly the explicit formula~\eqref{eq:T_u}.
\end{remark}

\subsection{Generalized solutions}

Pushing Lebesgue measure forward by the $c$-normal map turns $c$-convex functions into measures,
which lets us make sense of~\eqref{eq:ot-pde} for non-smooth $\phi$.

\begin{definition}\label{def:gs}
    A $c$-convex function $u$ on $\Om$ induces a {\bf Monge--Amp\`ere measure} $\mu_u$ by
    $\mu_u(E):=|T_u(E)|$ for Borel sets $E\subset\Om$, where $|\cdot|$ is the Lebesgue measure
    on $\R^n$. We say that $u$ is a {\bf generalized solution} (or {\bf Aleksandrov solution}) of
    \eqref{eq:ot-pde} if $\mu_u=\lambda\,dx$, that is, if
    \begin{equation}\label{eq:T_u-id}
        |T_u(E)|=\lambda|E|\qquad\text{for every Borel set }E\subset\Om.
    \end{equation}
\end{definition}

\subsection{A reverse triangle inequality for the relativistic cost}

The verification that our profile is $c$-convex reduces a multidimensional support inequality to
a one-dimensional one by means of the following superadditivity property of the map
$(p,v)\mapsto\sqrt{p^2-|v|^2}$ on the forward cone $\{p\geq|v|\}$. Geometrically it is the
reverse triangle inequality of Lorentzian geometry, and it is exactly what makes the
relativistic cost compatible with the radial construction.

\begin{lemma}\label{lem:in-CS}
    Let $p,q>0$ and $v,w\in\R^{n-1}$ satisfy $p\geq|v|$ and $q\geq|w|$. Then
    \begin{equation}\label{eq:in-CS}
        \sqrt{(p+q)^2-|v+w|^2}\geq\sqrt{p^2-|v|^2}+\sqrt{q^2-|w|^2}.
    \end{equation}
\end{lemma}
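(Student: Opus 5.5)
Both sides are nonnegative, so it suffices to compare squares. Writing $A=\sqrt{p^2-|v|^2}$ and $B=\sqrt{q^2-|w|^2}$, the right-hand side squared is $A^2+B^2+2AB$. The left-hand side squared expands as
\[
(p+q)^2-|v+w|^2=(p^2-|v|^2)+(q^2-|w|^2)+2(pq-v\cdot w)=A^2+B^2+2(pq-v\cdot w).
\]
Note also that $p+q\geq|v|+|w|\geq|v+w|$, so the left-hand side of \eqref{eq:in-CS} is well defined. Hence \eqref{eq:in-CS} is equivalent to
\[
pq-v\cdot w\geq AB=\sqrt{(p^2-|v|^2)(q^2-|w|^2)} .
\]

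To prove this reduced inequality, I first use Cauchy--Schwarz, $v\cdot w\leq|v||w|$, which gives $pq-v\cdot w\geq pq-|v||w|\geq0$. The last inequality uses $p\geq|v|$ and $q\geq|w|$. It therefore suffices to show
\[
(pq-|v||w|)^2\geq(p^2-|v|^2)(q^2-|w|^2).
\]
Expanding both sides and cancelling the common terms $p^2q^2$ and $|v|^2|w|^2$, this becomes
\[
p^2|w|^2+q^2|v|^2-2pq|v||w|\geq0,
\]
which is $(p|w|-q|v|)^2\geq0$. Taking square roots is legitimate because both quantities are nonnegative, and this completes the argument.

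The only point requiring care is the sign bookkeeping: the left-hand side must be real, and $pq-v\cdot w\geq0$ is needed before squaring. Both follow from the forward-cone hypotheses $p\geq|v|$ and $q\geq|w|$. Otherwise the argument is the standard proof of the reverse Cauchy--Schwarz inequality for timelike vectors in Minkowski space. The case where a square root vanishes (say $p=|v|$) is covered automatically.
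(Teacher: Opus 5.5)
Your proposal is correct and follows the paper's proof exactly: square to reduce to $pq-v\cdot w\geq\sqrt{p^2-|v|^2}\,\sqrt{q^2-|w|^2}$, apply Cauchy--Schwarz, square again, and arrive at $(p|w|-q|v|)^2\geq0$. Your added check that $p+q\geq|v+w|$, which makes the left-hand side well defined, is a small point the paper leaves implicit.
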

\begin{proof}
    Both sides of~\eqref{eq:in-CS} are non-negative, so squaring shows that~\eqref{eq:in-CS} is
    equivalent to
    \[
        pq-v\cdot w\geq\sqrt{p^2-|v|^2}\,\sqrt{q^2-|w|^2}.
    \]
    Since $v\cdot w\leq|v|\,|w|$ by the Cauchy--Schwarz inequality, it suffices to prove the
    stronger inequality
    \[
        pq-|v|\,|w|\geq\sqrt{p^2-|v|^2}\,\sqrt{q^2-|w|^2}.
    \]
    Here the left-hand side is non-negative because $pq\geq|v|\,|w|$ by hypothesis, so we may
    square once more; the resulting inequality simplifies to $(p\,|w|-q\,|v|)^2\geq0$, which
    holds. This proves~\eqref{eq:in-CS}.
\end{proof}

\section{Proof of Theorem~\ref{thm:A}}\label{sec:thmA}

\subsection{Reduction to an ordinary differential equation}\label{ss:reduction}

We seek a solution of \eqref{eq:ot-pde} among the rotationally symmetric profiles
\begin{equation}\label{eq:u}
    u(x',t)=\sqrt{r(t)^2-|x'|^2},\qquad x=(x',t)\in\R^{n-1}\times\R,
\end{equation}
where $r$ is a positive function of $t$ alone and, as before, a dot denotes $d/dt$.
The graph of such a $u$ is a hypersurface of revolution about the $t$-axis with meridian $r$.

From \eqref{eq:u} we have $\D u=(-x'/u,r\dot r/u)$, and substituting into the
explicit formula \eqref{eq:T_u} for the $c$-normal map gives
\begin{equation}\label{s3:e1}
    T_u(x',t)=(A(t)x',B(t)),
\end{equation}
where
\begin{equation}\label{eq:A-B}
    A(t)=1-\frac{a}{r\sqrt{1+\dot{r}^2}},\qquad
    B(t)=t+\frac{a\dot{r}}{\sqrt{1+\dot{r}^2}}.
\end{equation}
Differentiating \eqref{s3:e1} produces the block upper-triangular Jacobian
\begin{equation}\label{eq:DT_u}
    DT_u=
    \begin{pmatrix}
        A(t)I_{n-1} & \dot{A}(t)x'\\
        0 & \dot{B}(t)
    \end{pmatrix},
    \qquad
    \det DT_u=A(t)^{n-1}\,\dot{B}(t).
\end{equation}
Since $\dot B=1+a\ddot r/(1+\dot r^2)^{3/2}$ and \eqref{eq:ot-pde} requires $\det DT_u=\lambda$,
the profile \eqref{eq:u} solves \eqref{eq:ot-pde} if and only if the meridian $r$ satisfies the
second order ordinary differential equation
\begin{equation}\label{eq:ode}
    \left(1-\frac{a}{r\sqrt{1+\dot{r}^2}}\right)^{n-1}
    \left(1+\frac{a\ddot{r}}{(1+\dot{r}^2)^{3/2}}\right)=\lambda.
\end{equation}
We single out the initial conditions
\begin{equation}\label{eq:ic}
    r(0)=a,\qquad \dot r(0)=0,
\end{equation}
for which the radial factor satisfies $A(0)=0$. This degeneracy is the seed of the singular
behaviour: with $A(0)=0$, equation \eqref{eq:ode} can hold only if $\ddot r$ blows up at $t=0$.
Turning the relation \eqref{eq:ode}--\eqref{eq:ic} into a genuine solution, and establishing its
regularity, is the content of Sections~\ref{ss:system}--\ref{ss:verify}.

\begin{remark}[Heuristics for the H\"older exponent]\label{rk:heuristic}
    The value $\tfrac1{2n-1}$ in Theorem~\ref{thm:A} can be read off from
    \eqref{eq:ode}--\eqref{eq:ic} by a scaling heuristic, which we record for orientation only
    and do not use in the rigorous argument. Set $s=\dot r$. Near $t=0$ one expects
    $A\sim s^2/2$ as $s\to0$ (this is proved in Lemma~\ref{lem:A}). Since $\ddot r=\dot s$,
    equation \eqref{eq:ode} then reads, to leading order,
    $(s^2/2)^{n-1}(1+a\dot s)\approx\lambda$, whence $\dot s\sim s^{-2(n-1)}$. Integrating in
    $t$ gives $|t|\sim|s|^{2n-1}$, that is $|s|\sim|t|^{1/(2n-1)}$; as $\partial_t u(0,t)=s(t)$,
    this is the gradient H\"older exponent $\tfrac1{2n-1}$.
\end{remark}

\subsection{The autonomous system and its asymptotics}\label{ss:system}

In this subsection, we consider the following ODE system transformed from \eqref{eq:ode}:
\begin{equation}\label{eq:ode-sys}
    \left\{
        \begin{aligned}
            \frac{dt}{ds}&=\frac{a\mathcal{A}^{n-1}}{(\lambda-\mathcal{A}^{n-1})(1+s^2)^{3/2}},\\
            \frac{dR}{ds}&=s\frac{a\mathcal{A}^{n-1}}{(\lambda-\mathcal{A}^{n-1})(1+s^2)^{3/2}},\\
            t(0)&=0,R(0)=a.
        \end{aligned}
    \right.
\end{equation}
Here the function $\mathcal{A}$ in $s$ is given by
\begin{equation}\label{eq:A(s)}
    \mathcal{A}(s):=1-\frac{a}{R(s)\sqrt{1+s^2}}.
\end{equation}
The initial condition in \eqref{eq:ode-sys} gives $\mathcal{A}(0)=0$.
Since $\lambda>0$, the right-hand side of \eqref{eq:ode-sys} is smooth near $s=0$.
Then there exists a sufficiently small $\delta>0$ so that \eqref{eq:ode-sys} admits a unique $C^\infty$ solution $s\mapsto(t(s),R(s))$ on $(-\delta,\delta)$.
In particular, the function $\mathcal{A}$ is smooth on $(-\delta,\delta)$.
For simplicity of notation, we will use $'$ to denote the differentiation in $s$.

\begin{lemma}\label{lem:A}
    $\mathcal{A}(s)=s^2/2+o(s^2)$.
\end{lemma}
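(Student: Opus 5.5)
Since $\mathcal{A}$ is smooth on $(-\delta,\delta)$ with $\mathcal{A}(0)=0$, I would prove the lemma by computing $\mathcal{A}'(0)$ and $\mathcal{A}''(0)$ and invoking Taylor's theorem. The whole computation rests on one observation: $dR/ds$ in \eqref{eq:ode-sys} carries the factor $\mathcal{A}^{n-1}$, which vanishes at $s=0$ to positive order because $n\ge2$. Hence $R$ is very flat at $s=0$, and the quadratic term of $\mathcal{A}$ comes entirely from the factor $(1+s^2)^{-1/2}$.

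First, I would write $\mathcal{A}(s)=1-a\,R(s)^{-1}(1+s^2)^{-1/2}$ and differentiate:
\[
\mathcal{A}'=\frac{aR'}{R^2\sqrt{1+s^2}}+\frac{a s}{R(1+s^2)^{3/2}} .
\]
At $s=0$ we have $R'(0)=0\cdot(\cdots)=0$ from the system, and the second term vanishes as well, so $\mathcal{A}'(0)=0$.

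Next, I would compute the second derivative. Differentiating once more, the terms containing $R'$ vanish at $0$. The term containing $R''$ is $aR''(0)/R(0)^2$, and $R''(0)$ needs some care. Write $R'=s\,g(s)$ with
\[
g=\frac{a\mathcal{A}^{n-1}}{(\lambda-\mathcal{A}^{n-1})(1+s^2)^{3/2}} .
\]
Then $R''(0)=g(0)+0=a\mathcal{A}(0)^{n-1}/\lambda=0$, since $n-1\ge1$ and $\mathcal{A}(0)=0$. The remaining contribution is
\[
\frac{d}{ds}\Big[\frac{as}{R(1+s^2)^{3/2}}\Big]\Big|_{s=0}=\frac{a}{R(0)}=1,
\]
using $R(0)=a$. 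Thus $\mathcal{A}''(0)=1$, and Taylor's theorem gives $\mathcal{A}(s)=s^2/2+o(s^2)$.

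The main, though mild, obstacle is that the system is implicit: $\mathcal{A}$ appears on the right-hand side of the equation for $R'$. The argument therefore only evaluates quantities at $s=0$, where $\mathcal{A}(0)=0$ is already known from the initial data, and it relies on the smoothness of the solution asserted before the lemma. If $n=2$ one could worry that $\mathcal{A}^{n-1}=\mathcal{A}$ only vanishes to first order, but that order is enough to give $R''(0)=0$. In fact $\mathcal{A}'(0)=0$ shows $\mathcal{A}$ vanishes to second order, so $R'=O(s^{2n-1})$ and $R-a=O(s^{2n})$, which is a cleaner way to see that $R$ does not contribute at order $s^2$.
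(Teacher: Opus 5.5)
Your proposal is correct and follows the paper's proof essentially step for step. Both arguments get $\mathcal{A}'(0)=0$ from $R'(0)=0$, get $R''(0)=t'(0)=0$ via $R'=s\,t'$, and get $\mathcal{A}''(0)=a/R(0)=1$, then conclude by Taylor's theorem. The one difference is that you write out the computation of $\mathcal{A}''(0)$, which the paper leaves as a ``direct calculation.''
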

\begin{proof}
    We already know that $\mathcal{A}(0)=0$.
    Differentiating \eqref{eq:A(s)}, we have
    \[
    \mathcal{A}'(s)=a\left(\frac{R'}{R^2\sqrt{1+s^2}}+\frac{s}{R(1+s^2)^{3/2}}\right)\quad\implies\quad\mathcal{A}'(0)=0.
    \]
    The first and the second equations in \eqref{eq:ode-sys} imply $R'=st'$.
    Hence
    \[
    R''=st''+t'\quad\implies\quad R''(0)=0.
    \]
    A direct calculation gives $\mathcal{A}''(0)=1$.
    Lemma~\ref{lem:A} follows immediately.
\end{proof}

\begin{lemma}\label{lem:inverse}
    After possibly decreasing $\delta>0$, the function $s\mapsto t(s)$ is strictly increasing on $(-\delta,\delta)$.
    Then there exists a unique inverse function $t\mapsto s(t)$ defined on some interval $(-T,T)$.
    Moreover, the function $s$ satisfies the following asymptotic formula near $t=0$:
    \begin{equation}\label{eq:s-asymp}
        s(t)=\gamma\operatorname{sgn}(t)|t|^{\frac{1}{2n-1}}+o(|t|^{\frac{1}{2n-1}}),
    \end{equation}
    where $\gamma$ is a positive constant depending only on $n,a,\lambda$.
\end{lemma}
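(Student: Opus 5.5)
We plan to read off the behaviour of $t(s)$ near $s=0$ directly from the first equation of \eqref{eq:ode-sys}, and then invert it. By Lemma~\ref{lem:A}, $\mathcal{A}(s)=s^2/2+o(s^2)$. Hence $\mathcal{A}(s)\ge0$ for small $s$, with equality only at $s=0$. It follows that
\[
\mathcal{A}(s)^{n-1}=\frac{s^{2(n-1)}}{2^{n-1}}\bigl(1+o(1)\bigr),\qquad \lambda-\mathcal{A}^{n-1}=\lambda+o(1),\qquad (1+s^2)^{3/2}=1+o(1).
\]
Substituting into \eqref{eq:ode-sys} gives
\[
t'(s)=\frac{a}{2^{n-1}\lambda}\,s^{2(n-1)}\bigl(1+o(1)\bigr)\qquad (s\to0).
\]
After shrinking $\delta$ we may assume $\mathcal{A}>0$ for $s\neq0$ and $\mathcal{A}^{n-1}<\lambda$ on $(-\delta,\delta)$. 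Then $t'(s)>0$ for $s\ne0$ and $t'(0)=0$. A $C^1$ function whose derivative is positive except at one isolated point is strictly increasing, which gives the monotonicity. The inverse $s(t)$ therefore exists, is continuous, and is defined on the image interval. Since $t(0)=0$ and $t$ is increasing, this image contains some symmetric interval $(-T,T)$; take $T=\min(t(\delta^-),-t(-\delta^+))>0$.

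Next we integrate. Because $2(n-1)$ is even, integrating from $0$ to $s$ yields
\[
t(s)=\frac{a}{2^{n-1}(2n-1)\lambda}\,\operatorname{sgn}(s)\,|s|^{2n-1}\bigl(1+o(1)\bigr).
\]
The sign is correct for negative $s$ because $\int_0^s \sigma^{2n-2}\,d\sigma=s^{2n-1}/(2n-1)$. Set $c_0=a/(2^{n-1}(2n-1)\lambda)$. Then
\[
|t|=c_0|s|^{2n-1}(1+o(1)),\qquad \operatorname{sgn}(t)=\operatorname{sgn}(s).
\]

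Finally we invert. Since $s(t)\to0$ as $t\to0$ by continuity of the inverse, we may substitute $s=s(t)$ and take $(2n-1)$-th roots. Raising $1+o(1)$ to a fixed power is again $1+o(1)$, so
\[
|s(t)|=c_0^{-1/(2n-1)}|t|^{1/(2n-1)}(1+o(1)).
\]
Thus \eqref{eq:s-asymp} holds with
\[
\gamma=\bigl(2^{n-1}(2n-1)\lambda/a\bigr)^{1/(2n-1)},
\]
which depends only on $n,a,\lambda$.

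The only delicate point is keeping the $o(1)$ errors uniform. This works because Lemma~\ref{lem:A} gives a genuine limit $\mathcal{A}(s)/s^2\to1/2$, so the integrand ratio tends to $1$ and the integrated ratio does as well, by a standard l'H\^opital/averaging argument. In fact, since $\mathcal{A}$ is smooth with $\mathcal{A}(0)=\mathcal{A}'(0)=0$ and $\mathcal{A}''(0)=1$, we can write $\mathcal{A}=s^2h(s)$ with $h$ smooth and $h(0)=1/2$. Then $t'(s)=s^{2n-2}g(s)$ with $g$ smooth and $g(0)=a/(2^{n-1}\lambda)>0$, which makes both the monotonicity and the asymptotics essentially immediate.
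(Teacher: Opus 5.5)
Your proposal is correct and follows the paper's proof essentially step for step. You plug $\mathcal{A}=s^2/2+o(s^2)$ into the first equation of \eqref{eq:ode-sys} to get $t'(s)\sim a\lambda^{-1}2^{-(n-1)}s^{2(n-1)}$, deduce strict monotonicity from $t'>0$ off $s=0$, integrate, and invert, arriving at the same $\gamma=(2^{n-1}(2n-1)\lambda/a)^{1/(2n-1)}$; your remarks on uniformity of the $o(1)$ terms and on the factorization $t'=s^{2n-2}g(s)$ with $g(0)>0$ make explicit what the paper leaves implicit.
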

\begin{proof}
    Plugging $\mathcal{A}(s)=s^2/2+o(s^2)$ (see Lemma~\ref{lem:A}) into the first equation in \eqref{eq:ode-sys},
    \begin{equation}\label{eq:t'-asymp}
        t'(s)=a\lambda^{-1}2^{-(n-1)}s^{2(n-1)}+o(s^{2(n-1)}).
    \end{equation}
    After possibly decreasing $\delta>0$, one has $t'(s)>0$ on $(-\delta,\delta)\setminus\{0\}$.
    Using $\mathcal{A}(0)=0$, we have $t'(0)=0$.
    Hence the function $s\mapsto t(s)$ is strictly increasing on $(-\delta,\delta)$.
    Then there exists an inverse function $t\mapsto s(t)$ defined on $(-T,T)$, where $T:=\min\{-t(-\delta),t(\delta)\}>0$.

    Integrating \eqref{eq:t'-asymp}, we have the asymptotic formula for the function $t$:
    \begin{equation}\label{eq:t-asymp}
        t(s)=a\lambda^{-1}2^{-(n-1)}(2n-1)^{-1}s^{2n-1}+o(s^{2n-1}).
    \end{equation}
    Inverting this leading-order relation gives \eqref{eq:s-asymp} with
    \[
    \gamma:=(a^{-1}\lambda 2^{n-1}(2n-1))^{\frac{1}{2n-1}}.
    \]
\end{proof}

\subsection{The singular profile and the choice of radius}\label{ss:profile}

By Lemma~\ref{lem:inverse}, the function $s(t)$ is defined and continuous on $(-T,T)$ with
$s(0)=0$. Set
\[
r(t):=R(s(t)),\qquad t\in(-T,T),
\]
so that $r$ is continuous and $r(0)=R(0)=a$.

\begin{lemma}\label{lem:C1}
    The function $r$ is $C^1$ on $(-T,T)$ and $\dot{r}=s$ there.
\end{lemma}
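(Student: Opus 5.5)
We will prove that $r=R\circ s$ is $C^1$ with $\dot r=s$ by splitting $(-T,T)$ into $t\neq0$ and $t=0$, using the relation $R'=s\,t'$ that follows from the first two equations of \eqref{eq:ode-sys}.

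\emph{Away from the origin.} Fix $t_0\neq0$ and set $s_0=s(t_0)\neq0$. By \eqref{eq:t'-asymp}, after the shrinking of $\delta$ done in Lemma~\ref{lem:inverse}, we have $t'(s_0)>0$. Since $t(\cdot)$ is smooth, the inverse function theorem makes $s(\cdot)$ smooth near $t_0$ with $\dot s(t_0)=1/t'(s_0)$. The chain rule then gives
\[
\dot r(t_0)=R'(s_0)\,\dot s(t_0)=\frac{R'(s_0)}{t'(s_0)}=s_0=s(t_0),
\]
where the last equalities use $R'=st'$.

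\emph{At the origin.} Here $t'(0)=0$, so the inverse function theorem fails and $s$ is not differentiable at $0$. We instead work with difference quotients. By the mean value theorem applied to the smooth functions $R$ and $t$ on $[0,s]$ (or $[s,0]$), together with $R'=st'$, we get
\[
R(s)-a=\int_0^s \sigma\, t'(\sigma)\,d\sigma .
\]
Since $t'\ge0$ on $(-\delta,\delta)$, this yields
\[
|R(s)-a|\le |s|\int_{[0,s]} t'(\sigma)\,d\sigma=|s|\,|t(s)|.
\]
Substituting $s=s(t)$ gives $|r(t)-r(0)|\le |s(t)|\,|t|$, hence
\[
\left|\frac{r(t)-r(0)}{t}\right|\le |s(t)|\to0 \quad\text{as } t\to0,
\]
because $s$ is continuous with $s(0)=0$ (or directly from \eqref{eq:s-asymp}). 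So $\dot r(0)=0=s(0)$.

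\emph{Continuity of the derivative.} We now have $\dot r=s$ on all of $(-T,T)$. Since $s$ is continuous, being the inverse of a continuous strictly increasing function, $r\in C^1(-T,T)$.

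The only delicate step is the origin. There the monotonicity of $t$, i.e.\ $t'\ge0$, is what allows the integral to be bounded by $|s|\,|t(s)|$. Alternatively, the bound follows directly from the asymptotics: $R(s)-a=O(s^{2n})$ and $|t|\sim|s|^{2n-1}$ give a quotient of order $O(|s|)$.
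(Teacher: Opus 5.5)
Your proof is correct and follows the paper's argument: away from $t=0$ you use the inverse function theorem with $R'=st'$, and at $t=0$ you estimate the difference quotient using $R'=st'$. Your bound $|R(s)-a|\le|s|\,|t(s)|$ (which comes from the fundamental theorem of calculus or the Cauchy mean value theorem, not the plain mean value theorem) plays the role of the paper's L'H\^opital step $\lim_{s\to0}R'/t'=\lim_{s\to0}s=0$.
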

\begin{proof}
   If $t\in(-T,T)\setminus\{0\}$, then $\dot{s}(t)\neq 0$. The implicit function theorem yields that the function $t\mapsto s(t)$ is smooth and satisfies $\dot{s}=1/t'$.
   The first and the second equations in \eqref{eq:ode-sys} give $R'=st'$.
  Using the chain rule,
   \[
    \dot{r}=R'\dot{s}=st'(t')^{-1}=s.
   \]
   If $t=0$, we verify that $\dot{r}(0)=0$ by definition. In fact,
   \[
   \lim_{t\to 0}\frac{r(t)-r(0)}{t}=\lim_{s\to 0}\frac{R(s)-R(0)}{t(s)}=\lim_{s\to 0}\frac{R'}{t'}=0,
   \]
   where the fact $R'=st'$ is used in the last equality and $s(0)=0$.
   We conclude that $r$ is $C^1$ on $(-T,T)$ and $\dot{r}=s$ there.
\end{proof}

We next record two monotonicity facts that hold on the whole interval $(-T,T)$ and require no
restriction of its length.

\begin{lemma}\label{lem:Ra}
    For every $t\in(-T,T)$ one has $r(t)\geq a$ and $A(t)\in[0,1)$, where $A$ is the radial
    factor in \eqref{eq:A-B}.
\end{lemma}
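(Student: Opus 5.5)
We work in the phase variable $s\in(-\delta,\delta)$ and then transfer to $t$ through the increasing bijection $s\mapsto t(s)$ of Lemma~\ref{lem:inverse}. Along the solution, $A(t)=\mathcal{A}(s(t))$, because $r(t)=R(s(t))$ and $\dot r=s$ by Lemma~\ref{lem:C1}. It therefore suffices to prove two things for all $s\in(-\delta,\delta)$: that $R(s)\geq a$, and that $0\leq\mathcal{A}(s)<1$.

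\textbf{Step 1: the bound $R\geq a$.} The second equation in \eqref{eq:ode-sys} gives $R'=s\,t'$. By Lemma~\ref{lem:inverse} we have $t'\geq0$ on $(-\delta,\delta)$; this holds after the shrinking of $\delta$ already performed there. Consequently $R'\geq0$ for $s>0$ and $R'\leq0$ for $s<0$. Hence $R$ attains its minimum at $s=0$, and $R(s)\geq R(0)=a$.

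\textbf{Step 2: the bound $\mathcal{A}<1$.} Since $R(s)\geq a>0$, the quantity $a/(R\sqrt{1+s^2})$ is positive, so $\mathcal{A}(s)<1$ immediately.

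\textbf{Step 3: the bound $\mathcal{A}\geq0$.} Combining $R\geq a$ with $\sqrt{1+s^2}\geq1$ gives $R\sqrt{1+s^2}\geq a$, which is exactly $\mathcal{A}\geq0$. Equality holds only at $s=0$.

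The one point needing care is that Step 1 relies on the sign of $t'$ on the \emph{whole} interval $(-\delta,\delta)$, not merely near $0$. Lemma~\ref{lem:inverse} supplies this, since $t$ is strictly increasing there, so $t'\geq0$ everywhere on the interval.

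If one prefers not to lean on that lemma, a self-contained argument is available. As long as $0\leq\mathcal{A}<1\leq\lambda^{1/(n-1)}$ (taking $\lambda\geq1$), or more generally as long as $\mathcal{A}^{n-1}<\lambda$, which is the regime where the system is defined, the first equation shows $t'\geq0$. One then runs a continuity argument: $t'\geq0$ yields $R\geq a$, which yields $\mathcal{A}\geq0$, and this persists along the interval. In any case the inequality $t'\geq 0$ on the interval, already established, closes the proof, and the translation back to $t\in(-T,T)$ via $s=s(t)$ gives $r(t)\geq a$ and $A(t)\in[0,1)$.
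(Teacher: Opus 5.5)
Your argument is correct and is essentially the paper's proof: $t'\geq0$ from Lemma~\ref{lem:inverse} makes $R'=s\,t'$ have the sign of $s$, so $R\geq R(0)=a$. Then $\dot r=s$ (Lemma~\ref{lem:C1}) turns $R\sqrt{1+s^2}\geq a$ into $A(t)\in[0,1)$. The optional ``self-contained'' continuity paragraph is not needed, and its parenthetical restriction $\lambda\geq1$ is unwarranted, but neither affects the main argument.
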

\begin{proof}
    By Lemma~\ref{lem:inverse} we have $t'(s)\geq0$, and the first two equations
    of~\eqref{eq:ode-sys} give $R'(s)=s\,t'(s)$, which therefore has the same sign as $s$.
    Hence $R$ is non-increasing for $s<0$ and non-decreasing for $s>0$, so $R$ attains its
    minimum at $s=0$ and $R(s)\geq R(0)=a$ for all $s$. In particular $r(t)=R(s(t))\geq a$.
    Consequently $r\sqrt{1+s^2}\geq a$, and by Lemma~\ref{lem:C1} (so that $\dot r=s$),
    \[
        A(t)=1-\frac{a}{r(t)\sqrt{1+s(t)^2}}\in[0,1).\qedhere
    \]
\end{proof}

We now fix, once and for all, the radius of the ball on which the singular profile is
constructed; it will \emph{not} be decreased anywhere in the remainder of the proof.

\begin{lemma}[Choice of radius]\label{lem:radius}
    There exists $\eta\in(0,\min\{T,a/8\})$ such that
    \begin{equation}\label{eq:R-cond}
        \frac{a\,|s(t)|}{\sqrt{1+s(t)^2}}\leq\frac a4\qquad\text{for all }t\in[-\eta,\eta].
    \end{equation}
    Fix such an $\eta$ and write $B_\eta:=B_\eta(0)\subset\R^n$. Then fixing an arbitrary point $(x'_0,t_0)\in B_\eta$, with the notations
    $r_0=r(t_0)$, $A_0=A(t_0)$ and $B_0=B(t_0)$:
    \begin{enumerate}
        \item[\rm(i)] the function $u(x',t):=\sqrt{r(t)^2-|x'|^2}$ is well-defined and positive
        on $B_\eta$;
        \item[\rm(ii)] $|t-B_0|\leq a/2$, and hence $a^2-(t-B_0)^2\geq 3a^2/4$, for all
        $t,t_0\in[-\eta,\eta]$;
        \item[\rm(iii)] $\sqrt{a^2-(t-B_0)^2}\geq|x'-A_0x_0'|$ for all
        $(x',t),(x_0',t_0)\in B_\eta$.
    \end{enumerate}
\end{lemma}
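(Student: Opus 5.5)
The existence of $\eta$ follows from continuity: $s$ is continuous on $(-T,T)$ with $s(0)=0$. The function $\sigma\mapsto a|\sigma|/\sqrt{1+\sigma^2}$ is continuous and vanishes at $0$. Hence there is $\eta_1>0$ such that \eqref{eq:R-cond} holds on $[-\eta_1,\eta_1]$. We take $\eta<\min\{\eta_1,T,a/8\}$, so that $[-\eta,\eta]\subset(-T,T)$. The remaining items are then elementary consequences of $|t|,|t_0|,|x'|,|x_0'|<\eta\le a/8$, together with Lemma~\ref{lem:Ra}.

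For (i), Lemma~\ref{lem:Ra} gives $r(t)\ge a$. For $(x',t)\in B_\eta$ we have $|x'|<\eta<a/8<a\le r(t)$, so $r(t)^2-|x'|^2>0$ and $u$ is well-defined and positive.

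For (ii), we use $B_0=t_0+a s(t_0)/\sqrt{1+s(t_0)^2}$ and the triangle inequality:
\[
|t-B_0|\le |t|+|t_0|+\frac{a|s(t_0)|}{\sqrt{1+s(t_0)^2}}\le \frac a8+\frac a8+\frac a4=\frac a2 .
\]
Squaring gives $(t-B_0)^2\le a^2/4$, and hence $a^2-(t-B_0)^2\ge 3a^2/4$.

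For (iii), Lemma~\ref{lem:Ra} gives $A_0\in[0,1)$. Therefore
\[
|x'-A_0x_0'|\le |x'|+|x_0'|<2\eta<\frac a4 .
\]
By (ii), $\sqrt{a^2-(t-B_0)^2}\ge \tfrac{\sqrt3}{2}a>\tfrac a4$, which gives (iii).

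No step here is a genuine obstacle. The only points needing care are that $\eta<T$, so that $s$, $r$ and $A$ are all defined on the closed interval $[-\eta,\eta]$, and that the bounds hold uniformly in the base point $(x_0',t_0)$. Both are secured by choosing $\eta$ before fixing the point.
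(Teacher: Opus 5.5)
Your proof is correct and takes essentially the same route as the paper. You choose $\eta$ by continuity of $s$ at $t=0$, use Lemma~\ref{lem:Ra} ($r\ge a$ and $A_0\in[0,1)$) for (i) and (iii), and get (ii) from the triangle inequality with $a|s(t_0)|/\sqrt{1+s(t_0)^2}\le a/4$ and $2\eta\le a/4$; the only cosmetic difference is that you split $|t-t_0|\le|t|+|t_0|$, where the paper bounds it directly by $2\eta$.
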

\begin{proof}
    Since $s$ is continuous with $s(0)=0$, the left-hand side of \eqref{eq:R-cond} tends to $0$
    as $t\to0$.
Hence there is $\eta\in(0,\min\{T,a/8\})$ satisfying \eqref{eq:R-cond} on
    $[-\eta,\eta]$. Fix such an $\eta$.

    (i) By Lemma~\ref{lem:Ra}, $r\geq a$, so for $(x',t)\in B_\eta$ we have
    $r(t)^2-|x'|^2\geq a^2-a^2/64>0$.

    (ii) Recall $B_0=t_0+a\,s(t_0)/\sqrt{1+s(t_0)^2}$, so \eqref{eq:R-cond} gives
    $|t_0-B_0|\leq a/4$. Therefore, for $t,t_0\in[-\eta,\eta]$,
    \[
        |t-B_0|\leq|t-t_0|+|t_0-B_0|\leq 2\eta+\frac a4\leq\frac a4+\frac a4=\frac a2,
    \]
    using $\eta\leq a/8$. Hence $a^2-(t-B_0)^2\geq a^2-a^2/4=3a^2/4$.

    (iii) By Lemma~\ref{lem:Ra}, $A_0\in[0,1)$, so for $(x',t),(x_0',t_0)\in B_\eta$,
    \[
        |x'-A_0x_0'|\leq|x'|+A_0|x_0'|<2\eta\leq\frac a4\leq\frac{\sqrt3}{2}a
        \leq\sqrt{a^2-(t-B_0)^2},
    \]
    where the final inequality is (ii).
\end{proof}

Throughout the rest of this section, $\eta>0$ is fixed as in Lemma~\ref{lem:radius} and
\begin{equation}\label{eq:u-sol}
    u(x',t):=\sqrt{r(t)^2-|x'|^2},\qquad(x',t)\in B_\eta,
\end{equation}
denotes the corresponding profile.
By Lemma~\ref{lem:C1} and \eqref{eq:u-sol}, $u$ is $C^1$ smooth on $B_\eta$, and Remark~\ref{rk:c-normal} implies that the $c$-normal map $T_u$ of $u$ is single-valued and equals \eqref{s3:e1}.

\subsection{Verification of the Aleksandrov solution and regularity}\label{ss:verify}

\begin{lemma}\label{lem:gs}
    The function $u$ is a generalized solution to \eqref{eq:ot-pde}.
\end{lemma}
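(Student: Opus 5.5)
Two things must be checked: that $u$ is $c$-convex on $B_\eta$ with $c$-support generated by $T_u(x_0)=(A_0x_0',B_0)$, and that $|T_u(E)|=\lambda|E|$ for every Borel $E\subset B_\eta$.

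For $c$-convexity, fix $(x_0',t_0)\in B_\eta$ and use the candidate support
\[
\ell(x',t)=\sqrt{a^2-|x'-A_0x_0'|^2-(t-B_0)^2}+b_0,
\]
with $b_0$ chosen so that $\ell(x_0',t_0)=u(x_0',t_0)$. By Lemma~\ref{lem:radius}(ii),(iii) the square root is real on $B_\eta$, so $\ell$ is finite there. The plan is to apply Lemma~\ref{lem:in-CS} with the split $p+q=r(t)$, $v+w=x'$:
\[
p=\sqrt{a^2-(t-B_0)^2},\quad v=x'-A_0x_0',\qquad q=r(t)-p,\quad w=A_0x_0'.
\]
Then $p\ge|v|$ is exactly Lemma~\ref{lem:radius}(iii). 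Lemma~\ref{lem:in-CS} gives
\[
u(x',t)\ge \ell(x',t)-b_0+\sqrt{q^2-A_0^2|x_0'|^2}.
\]
It therefore suffices to prove $\sqrt{(r(t)-p(t))^2-A_0^2|x_0'|^2}\ge b_0$, with equality at $t=t_0$. This is a one-dimensional statement in $t$, and proving it is the main obstacle.

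To handle it, I will first compute $b_0$ and check that equality holds at $t=t_0$. This should follow from the first-order matching $\D_x c=\D u$: the hemisphere is tangent to the graph, so the two sides agree at $x_0$. Next I will show that $q(t)=r(t)-\sqrt{a^2-(t-B_0)^2}$ is minimized at $t=t_0$ and satisfies $q\ge A_0 r_0\ge A_0|x_0'|$. At $t_0$ one has $\dot q=s-(t_0-B_0)/p=0$ by the definition of $B_0$, and $q(t_0)=r_0-a/\sqrt{1+s_0^2}=A_0r_0$. Global minimality on $[-\eta,\eta]$ means the circle of radius $a$ centered at $(B_0,0)$ touches the meridian $r$ from below. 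I will argue this from convexity of the meridian: $\ddot r=\dot s=1/t'>0$, and the ODE \eqref{eq:ode} gives $1+a\ddot r/(1+s^2)^{3/2}=\lambda/A^{n-1}\ge1$. Hence the meridian's curvature dominates the curvature $1/a$ of the circle of radius $a$ at every point, which yields the one-sided comparison on the interval, and is in fact stronger than needed. With $q\ge q(t_0)$, the expression $\sqrt{q^2-A_0^2|x_0'|^2}$ is minimized at $t_0$, which closes the support inequality. I expect care is needed at $t=0$, where $\ddot r=\infty$; this only helps, by arguing on each side separately.

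For the measure identity, $T_u(x',t)=(A(t)x',B(t))$ by \eqref{s3:e1}. Slice by $t$: $T_u$ maps the slice $\{t\}\times\{|x'|<\sqrt{\eta^2-t^2}\}$ onto $\{B(t)\}\times A(t)(\text{slice})$. Here $B$ is strictly increasing, since $\dot B=\lambda A^{-(n-1)}>0$ for $t\ne0$, so $T_u$ is injective off the null set $\{t=0\}$, whose image lies in the hyperplane $\{B=B(0)\}$ and is null. Away from $t=0$, $T_u$ is $C^1$ and injective with Jacobian $A^{n-1}\dot B=\lambda$ by \eqref{eq:DT_u} and \eqref{eq:ode}. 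Reducing the ODE at $t\neq0$ uses $\ddot r=\dot s=1/t'$ and the first equation of \eqref{eq:ode-sys}. The area formula then gives $|T_u(E)|=\lambda|E|$ for every Borel $E$.
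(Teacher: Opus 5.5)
Your proposal is correct and is essentially the paper's proof. For $c$-convexity you use the same one-dimensional inequality $r(t)\ge A_0r_0+\sqrt{a^2-(t-B_0)^2}$ (a convex meridian against a concave circular arc tangent at $t_0$), lifted to the graph by Lemma~\ref{lem:in-CS} with an equivalent splitting. For the measure identity you use the change-of-variables formula on $\{t\neq0\}$, where $T_u$ is a $C^1$ injection with Jacobian $\lambda$, instead of the paper's slicing in $w=\mathcal{B}(s)$; both routes are valid.

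Two small points. First, the derivative is $\dot q=s+(t-B_0)/p$; you wrote a minus sign, though $\dot q(t_0)=0$ is still correct. Second, the curvature-domination remark is superfluous: strict monotonicity of $\dot r=s$ together with concavity of the arc, as in the paper, is all that is needed.
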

\begin{proof}
    {\it Step 1: $u$ is $c$-convex in $B_\eta$.}

    It suffices to show that for an arbitrary point $(x_0',t_0)\in B_\eta$,
    there exists a $c$-support of $u$ at $(x_0',t_0)$.
    Note that $T_u(x',t)=(A(t)x',B(t))$, where $A$ and $B$ are given in \eqref{eq:A-B}.
    For simplicity of notation, set $r_0=r(t_0)$, $A_0=A(t_0)$ and $B_0=B(t_0)$.
    We will establish a one-dimensional support inequality for the function $r(t)$ and then obtain the support inequality for $u(x',t)$ by virtue of Lemma~\ref{lem:in-CS}.

    By Lemma~\ref{lem:radius}(ii), the function $g(t):=A_0r_0+\sqrt{a^2-(t-B_0)^2}$ is well-defined on $(-\eta,\eta)$.
    Define $h(t):=r(t)-g(t)$.
    Then $h(t_0)=\dot{h}(t_0)=0$.
    We see that $\dot{g}$ is strictly decreasing because $\ddot{g}(t)=-a^2/(a^2-(t-B_0)^2)^{3/2}<0$.
    On the other hand, $\dot{r}=s$ is strictly increasing by Lemma~\ref{lem:inverse}.
    Putting these facts together, we further see that $h$ attains its minimum at $t=t_0$.
    It follows that for every $t\in(-\eta,\eta)$ there holds
    \begin{equation}\label{eq:r-support}
        r(t)\geq A_0r_0+\sqrt{a^2-(t-B_0)^2}.
    \end{equation}

    Since $A_0\geq0$ by Lemma~\ref{lem:Ra} and $r_0\geq|x_0'|$, we have $A_0r_0\geq A_0|x_0'|$.
    On the other hand, by Lemma~\ref{lem:radius}(iii), $\sqrt{a^2-(t-B_0)^2}\geq|x'-A_0x_0'|$ on $B_\eta$.
    Then
    \begin{align*}
       &u(x',t)-\Big(u(x_0',t_0)+c((x',t),T_u(x_0',t_0))-c((x_0',t_0),T_u(x_0',t_0))\Big)\\
       =&\sqrt{r(t)^2-|x'|^2}-A_0\sqrt{r_0^2-|x_0'|^2}-\sqrt{a^2-|x'-A_0x_0'|^2-(t-B_0)^2}\\
       \geq&\sqrt{\Big(A_0r_0+\sqrt{a^2-(t-B_0)^2}\Big)^2-|A_0x_0'+(x'-A_0x_0')|^2}\\
       &-\sqrt{(A_0r_0)^2-|A_0x_0'|^2}-\sqrt{a^2-(t-B_0)^2-|x'-A_0x_0'|^2}\geq0,
    \end{align*}
    where \eqref{eq:r-support} is used in the second-to-last inequality and Lemma~\ref{lem:in-CS} is used in the last inequality.
    Define
    \[
    \ell(x',t):=u(x_0',t_0)+c((x',t),T_u(x_0',t_0))-c((x_0',t_0),T_u(x_0',t_0)).
    \]
    Then $\ell(x_0',t_0)=u(x_0',t_0)$ and $\ell\leq u$ on $B_\eta$, namely $\ell$ is a $c$-support of $u$ at $(x_0',t_0)$.
    Since $(x_0',t_0)$ is arbitrary, we conclude that $u$ is $c$-convex in $B_\eta$.

    {\it Step 2: $u$ satisfies \eqref{eq:T_u-id} for every Borel set $E\subset B_\eta$.}

    Fix an arbitrary Borel set $E\subset B_\eta$.
    Recall that $T_u$ is a single-valued map given by \eqref{s3:e1}.
    We reparametrize $T_u$ via $s$.
    For simplicity of notation, set
    \begin{align*}
        &\mathcal{A}(s):=A(t(s))=1-\frac{a}{R(s)\sqrt{1+s^2}},\\
    &\mathcal{B}(s):=B(t(s))=t(s)+\frac{as}{\sqrt{1+s^2}}.
    \end{align*}
    Then $T_u(x',t(s))=(\mathcal{A}(s)x',\mathcal{B}(s))$.
    Since $t'\geq 0$ and $a>0$, we have
    \[
    \mathcal{B}'(s)=t'+\frac{a}{(1+s^2)^{3/2}}>0.
    \]
    Hence $\mathcal{B}$ is a strictly increasing smooth function on $(s(-\eta),s(\eta))$.
    Let $\mathcal{B}^{-1}$ denote the inverse function of $\mathcal{B}$.

    We calculate $|T_u(E)|$ by slicing.
    Given a height $w\in(\mathcal{B}(s(-\eta)),\mathcal{B}(s(\eta)))$, the $w$-slice of $T_u(E)$ is given by
    \[
    (T_u(E))_w:=\{x'\in\R^{n-1}:(x',w)\in T_u(E)\}=\mathcal{A}(\mathcal{B}^{-1}(w))E_{t(\mathcal{B}^{-1}(w))},
    \]
    where $E_{t(\mathcal{B}^{-1}(w))}:=\{x'\in\R^{n-1}:(x',t(\mathcal{B}^{-1}(w)))\in E\}$.
    It follows that
    \begin{equation}\label{eq:slice}
        |(T_u(E))_w|=(\mathcal{A}(\mathcal{B}^{-1}(w)))^{n-1}|E_{t(\mathcal{B}^{-1}(w))}|.
    \end{equation}
    Using the first and the second equations in \eqref{eq:ode-sys}, we obtain
   \begin{equation}\label{eq:ode-id}
    (\mathcal{A}(s))^{n-1}\mathcal{B}'(s)=\lambda t'(s),\quad\forall s\in(-s(\eta),s(\eta)).
   \end{equation}
   Using \eqref{eq:slice} and \eqref{eq:ode-id} in order, we have
   \begin{align*}
    |T_u(E)|=&\int_{\mathcal{B}(s(-\eta))}^{\mathcal{B}(s(\eta))}|(T_u(E))_w|dw\\
    =&\int_{\mathcal{B}(s(-\eta))}^{\mathcal{B}(s(\eta))}(\mathcal{A}(\mathcal{B}^{-1}(w)))^{n-1}|E_{t(\mathcal{B}^{-1}(w))}|dw\\
    =&\int_{s(-\eta)}^{s(\eta)}(\mathcal{A}(s))^{n-1}|E_{t(s)}|\mathcal{B}'(s)ds\\
    =&\lambda\int_{s(-\eta)}^{s(\eta)}|E_{t(s)}|t'(s)ds=\lambda|E|.
   \end{align*}
   Since $E$ is arbitrary, we conclude that \eqref{eq:T_u-id} holds for every Borel set in $B_\eta$.

   Putting Step 1 and Step 2 together, Lemma~\ref{lem:gs} is proved.
\end{proof}

\begin{lemma}\label{lem:reg}
    $u\in C^{1,\frac{1}{2n-1}}(B_\eta)$ but $u\notin C^{1,\beta}(B_\eta)$ whenever $\beta>\frac{1}{2n-1}$.
\end{lemma}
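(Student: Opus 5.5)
The plan is to work directly with the explicit gradient
\[
\D u(x',t)=\Big(-\frac{x'}{u},\ \frac{r(t)s(t)}{u}\Big),\qquad u=\sqrt{r(t)^2-|x'|^2},
\]
using that on $B_\eta$ we have $u\geq\sqrt{a^2-a^2/64}$ bounded below (Lemma~\ref{lem:radius}(i)) and $r$ bounded and $C^1$. Since $t\mapsto u$ and $x'\mapsto u$ are smooth compositions of $r$, $|x'|^2$ with a function smooth away from $u=0$, the regularity of $\D u$ reduces to that of $r$ and $s=\dot r$ as functions of $t$. Concretely, $x'\mapsto x'/u$ is smooth in $x'$ and Lipschitz in $t$ (because $r\in C^1$), and $rs/u$ is a product of Lipschitz factors with $s$. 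Hence $\D u\in C^{0,\beta}(B_\eta)$ as soon as $s\in C^{0,\beta}([-\eta,\eta])$.

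\textbf{Step 1 (the H\"older bound).} I will prove $s\in C^{0,\frac1{2n-1}}([-\eta,\eta])$. I will work in the phase variable, where $t(s)$ is smooth and, by \eqref{eq:t'-asymp}, satisfies $t'(s)\geq c\,s^{2(n-1)}$ for some $c>0$ on the relevant $s$-interval (shrinking $\delta$ if needed, which only changes constants). I then split into two cases.
\begin{enumerate}
\item[(a)] Same sign, say $0\le s_1<s_2$. Then
\[
t(s_2)-t(s_1)\geq c\int_{s_1}^{s_2}\sigma^{2n-2}\,d\sigma\geq c'(s_2-s_1)^{2n-1},
\]
using the elementary inequality $\int_{s_1}^{s_2}\sigma^{m}\,d\sigma\ge (s_2-s_1)^{m+1}/(m+1)$ for $0\le s_1$. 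The case $s_1<s_2\le0$ is symmetric.
\item[(b)] Opposite signs, $s_1<0<s_2$. Then $t_2-t_1=|t_1|+t_2\gtrsim |s_1|^{2n-1}+s_2^{2n-1}\gtrsim (s_2-s_1)^{2n-1}$.
\end{enumerate}
In both cases $|s(t_2)-s(t_1)|\le C|t_2-t_1|^{1/(2n-1)}$. I expect this uniform two-point estimate to be the main technical point: \eqref{eq:s-asymp} alone only controls increments based at $t=0$, which is why I go through the lower bound on $t'$ rather than the asymptotics.

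\textbf{Step 2 (the Lipschitz pieces).} I will check that the other ingredients are Lipschitz on $B_\eta$. The function $r$ is Lipschitz because $|\dot r|=|s|$ is bounded, and $1/u$ is Lipschitz because $u$ is bounded below and Lipschitz. Combining these with Step 1 gives $\D u\in C^{0,\frac1{2n-1}}(B_\eta)$, i.e.\ $u\in C^{1,\frac1{2n-1}}(B_\eta)$.

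\textbf{Step 3 (optimality).} I will restrict to the axis $x'=0$. There $u(0,t)=r(t)$, so
\[
\partial_t u(0,t)=s(t)=\gamma\operatorname{sgn}(t)|t|^{\frac1{2n-1}}+o\big(|t|^{\frac1{2n-1}}\big)
\]
by \eqref{eq:s-asymp}. Therefore
\[
\frac{|\partial_tu(0,t)-\partial_tu(0,0)|}{|t|^\beta}\sim\gamma|t|^{\frac1{2n-1}-\beta}\to\infty\quad\text{as } t\to0
\]
whenever $\beta>\frac1{2n-1}$, so $u\notin C^{1,\beta}(B_\eta)$.
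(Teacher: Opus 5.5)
Your proposal is correct and follows the same route as the paper. Both arguments use that $u$ is bounded below on $B_\eta$ to reduce to $s\in C^{0,\frac1{2n-1}}([-\eta,\eta])$, and then test $\partial_t u$ along the axis with \eqref{eq:s-asymp}. Your two-point estimate from $t'(s)\ge c\,s^{2(n-1)}$ is in fact a more complete justification than the paper's, which only records that $s$ is $C^1$ off $0$ with $\dot s=1/t'$ and H\"older at $0$; that becomes a uniform bound only with exactly your quantitative lower bound on $t'$. One small fix: get that lower bound by compactness of $[s(-\eta),s(\eta)]\subset(-\delta,\delta)$, where $t'(s)/s^{2(n-1)}$ is continuous and positive, rather than by shrinking $\delta$, since $\eta$ is fixed.
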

\begin{proof}
    Set $\alpha:=\frac1{2n-1}$.

    {\it Step 1: $u\in C^{1,\alpha}(B_\eta)$.} By Lemma~\ref{lem:C1}, $u\in C^1(B_\eta)$ with
    \[
        \D u=\Big(-\frac{x'}{u},\ \frac{r\,s}{u}\Big),\qquad s=\dot r .
    \]
    By Lemma~\ref{lem:Ra}, $r\geq a$, and since $|x'|<\eta<a$ we have
    $u\geq\sqrt{a^2-\eta^2}=:c_0>0$ on $B_\eta$. Hence
    $\D u(x',t)=\Phi\big(x',r(t),s(t)\big)$, where
    \[
\Phi(x',\rho,\sigma)=\big(-x'/\sqrt{\rho^2-|x'|^2},\ \rho\sigma/\sqrt{\rho^2-|x'|^2}\big)
    \]
    is
    smooth, with bounded first derivatives, on the compact set
    \[
\{|x'|\leq\eta,\ a\leq\rho\leq\max_{[-\eta,\eta]}r,\ |\sigma|\leq\max_{[-\eta,\eta]}|s|\},
    \]
    where $\sqrt{\rho^2-|x'|^2}\geq c_0$. Thus $\Phi$ is Lipschitz, so $\D u$ inherits the
    modulus of continuity of $t\mapsto(r(t),s(t))$. As $r\in C^1$ is Lipschitz, it remains to show
    \begin{equation}\label{eq:s-holder}
        s\in C^{0,\alpha}([-\eta,\eta]).
    \end{equation}
    Lemma~\ref{lem:C1} yields that the function $s$ is $C^1$ on $[-\eta,\eta]\setminus\{0\}$ with $\dot s=1/t'$.
    On the other hand, Lemma~\ref{lem:inverse} implies that the function $s$ is $\alpha$-H\"older continuous at $t=0$.
    Hence \eqref{eq:s-holder} is true and we conclude that $u\in C^{1,\alpha}(B_\eta)$.

    {\it Step 2: $u\notin C^{1,\beta}(B_\eta)$ for $\beta>\alpha$.} Along the axis $x'=0$ we have
    $u(0,t)=r(t)$, so $\partial_t u(0,t)=s(t)$ and $\partial_t u(0,0)=s(0)=0$. By
    \eqref{eq:s-asymp},
    \[
        \frac{|\partial_t u(0,t)-\partial_t u(0,0)|}{|t|^{\beta}}
        =\frac{|s(t)|}{|t|^{\beta}}=\gamma\,|t|^{\alpha-\beta}+o(|t|^{\alpha-\beta})
        \xrightarrow[\;t\to0\;]{}\infty .
    \]
    Hence $\partial_t u$ is not $\beta$-H\"older continuous at the origin, so
    $u\notin C^{1,\beta}(B_\eta)$.
\end{proof}

Putting Lemma~\ref{lem:gs} and Lemma~\ref{lem:reg} together, we complete the proof of Theorem~\ref{thm:A}.

\section{Proof of Theorem~\ref{thm:B}}\label{sec:thmB}

The bridge between the relativistic optimal transport equation \eqref{eq:ot-pde} and the
special Lagrangian curvature equation \eqref{eq:slce} is the following lemma of Qiu and Zhou.

\begin{lemma}[{\cite[Lemma~6.1]{QZ26}}]\label{lem:ot-slce}
    Fix $\Theta\in(0,\pi/2)$. If $u$ is a smooth solution of \eqref{eq:ot-pde} with
    $a=\tan\Theta$ and $\lambda=1/\cos^2\Theta$ on a ball $B\subset\R^2$, then the graph
    $\mathcal{M}:=\{(z,u(z)):z\in B\}$ is a smooth solution of \eqref{eq:slce}.
\end{lemma}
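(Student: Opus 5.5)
The plan is to identify the matrix in \eqref{eq:ot-pde} with a shifted shape operator of the graph and then solve for the phase algebraically. The one non-algebraic input is a sign condition that fixes the branch of $\arctan$.

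\textbf{Step 1: geometric form of the equation.} Write $W:=\sqrt{1+|\D u|^2}$. For the graph $\mathcal{M}$ of $u$, the inverse metric is $g^{-1}=I-\frac{\D u\otimes\D u}{W^2}$ and the second fundamental form is $D^2u/W$. Hence the Weingarten map is
\[
S=\frac1W\Big(I-\frac{\D u\otimes\D u}{W^2}\Big)D^2u .
\]
Although $S$ is not symmetric, it is similar to a symmetric matrix, and its eigenvalues are the principal curvatures $\kappa_1,\kappa_2$. The matrix inside the determinant in \eqref{eq:ot-pde} is therefore exactly $aS+I$. With $a=\tan\Theta$ and $\lambda=1/\cos^2\Theta$, equation \eqref{eq:ot-pde} becomes
\[
(1+\kappa_1\tan\Theta)(1+\kappa_2\tan\Theta)=1+\tan^2\Theta .
\]
Expanding and dividing by $\tan\Theta>0$ gives
\[
\kappa_1+\kappa_2=\tan\Theta\,(1-\kappa_1\kappa_2).
\]

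\textbf{Step 2: translating into the phase.} First rule out $\kappa_1\kappa_2=1$. If it held, the relation would force $\kappa_1+\kappa_2=0$, and then $\kappa_1\kappa_2=-\kappa_1^2\leq0$, a contradiction. So we may divide by $1-\kappa_1\kappa_2$ and obtain
\[
\tan(\arctan\kappa_1+\arctan\kappa_2)=\tan\Theta .
\]
Consequently $\arctan\kappa_1+\arctan\kappa_2\in\{\Theta,\ \Theta-\pi\}$, since the sum lies in $(-\pi,\pi)$.

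\textbf{Step 3: choosing the branch (the main obstacle).} The pure algebra cannot decide between the two values. For instance, $\kappa_1=\kappa_2$ very negative also satisfies the determinant relation while giving phase $\Theta-\pi$. So I will use $c$-convexity of $u$, which is part of the setting: $u$ is a $c$-convex solution, as in Section~\ref{sec:thmA}. At each point a $c$-support touches $u$ from below, so $D^2u\geq D_x^2c(\cdot,T_u(x))$. After the same algebraic manipulation that produced \eqref{eq:ot-pde}, this says $aS+I$ has nonnegative eigenvalues. Since their product is $\lambda>0$, both are positive: $1+\kappa_i\tan\Theta>0$.

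If one prefers to avoid $c$-convexity, there is a connectedness argument. The eigenvalues $1+\kappa_i\tan\Theta$ depend continuously on the point and their product never vanishes, so on the connected ball their common sign is constant. It then suffices to check positivity at a single point. In our application this can be seen directly: the eigenvalues are $A>0$ and $\dot B>0$ away from the degenerate axis.

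Positivity gives $\kappa_i>-\cot\Theta$, i.e.\ $\arctan\kappa_i>\Theta-\pi/2$. Therefore
\[
\arctan\kappa_1+\arctan\kappa_2>2\Theta-\pi>\Theta-\pi .
\]
This excludes the branch $\Theta-\pi$, so \eqref{eq:slce} holds at every point of $\mathcal{M}$. Smoothness of $\mathcal{M}$ is inherited from $u$.
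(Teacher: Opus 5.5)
Your argument is correct. It necessarily differs from the paper's route, because the paper gives no proof of this lemma: it imports the statement from \cite[Lemma~6.1]{QZ26} and adds only the parallel-surface gloss of Remark~\ref{rk:geometry}. Your direct verification runs as follows: identify the matrix in \eqref{eq:ot-pde} with $I+aS$, where $S$ is the Weingarten map for the upward normal; reduce to $\kappa_1+\kappa_2=\tan\Theta\,(1-\kappa_1\kappa_2)$; exclude $\kappa_1\kappa_2=1$; then select the branch. This is self-contained, and its main value is that it exposes a hypothesis the quoted statement leaves implicit.

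The branch problem is genuine. The concave cap $u=\sqrt{\rho^2-|z|^2}$ with $\rho=\tan(\Theta/2)$ has $\kappa_1=\kappa_2=-\cot(\Theta/2)$. It satisfies $\det(I+aS)=(1-a/\rho)^2=\sec^2\Theta=\lambda$, so it is a smooth solution of \eqref{eq:ot-pde}, yet its phase is $\Theta-\pi$. This is the precise form of your ``very negative'' example; the equal-curvature root is exactly $-\cot(\Theta/2)$, not an arbitrary negative value. So some ellipticity condition $I+aS>0$ is needed. Deriving it from $c$-convexity, the setting in which \eqref{eq:ot-pde} is introduced, is the right repair.

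The one step you state loosely can be made exact by differentiating $\D_xc(x,T_u(x))=\D u(x)$. This gives $DT_u=(D^2_{xy}c)^{-1}(D^2u-D^2_{xx}c)$. Here $D^2_{xy}c=c^{-1}\big(I+c^{-2}(x-y)\otimes(x-y)\big)$, with $c=c(x,y)$, is positive definite. Also $D^2u-D^2_{xx}c\geq0$ at $y=T_u(x)$, because the $c$-support touches from below. Hence $DT_u=I+aS$ has nonnegative real eigenvalues.

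In the paper's only use of the lemma, Lemma~\ref{lem:perturbed}(iii), your connectedness fallback is not even needed. The eigenvalues of $DT_{u_\ve}$ are $\mathcal{A}_\ve>0$ and $\dot B_\ve=\lambda/\mathcal{A}_\ve>0$ at every point; for $\ve>0$ there is no degenerate axis. So your proof covers the application.
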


By Lemma~\ref{lem:ot-slce}, Theorem~\ref{thm:B} reduces to producing \emph{smooth} solutions of
\eqref{eq:ot-pde} in dimension $n=2$ that converge uniformly to the singular profile of
Theorem~\ref{thm:A}. We obtain them by perturbing the initial value in \eqref{eq:ode-sys} so as
to remove the degeneracy $A(0)=0$ that produced the singularity: raising $R(0)$ from $a$ to
$a+\varepsilon$ keeps the radial factor strictly positive, so the perturbed profile is smooth,
and letting $\varepsilon\to0$ restores the singular solution.

Throughout this section we fix $\Theta\in(0,\pi/2)$ and set $a=\tan\Theta$,
$\lambda=1/\cos^2\Theta$, and $n=2$. For $\varepsilon>0$, consider the system obtained from
\eqref{eq:ode-sys} by raising the initial value of $R$ by $\varepsilon$:
 \begin{equation}\label{eq:sys-ve}
    \left\{
        \begin{aligned}
            \frac{dt_\ve}{ds}&=\frac{a\mathcal{A}_\ve^{n-1}}{(\lambda-\mathcal{A}_\ve^{n-1})(1+s^2)^{3/2}},\\
            \frac{dR_\ve}{ds}&=s\frac{a\mathcal{A}_\ve^{n-1}}{(\lambda-\mathcal{A}_\ve^{n-1})(1+s^2)^{3/2}},\\
            t_\ve(0)&=0,R_\ve(0)=a+\ve.
        \end{aligned}
    \right.
\end{equation}
Here, in analogy with \eqref{eq:A(s)}, the function $\mathcal{A}_\ve$ is given by
\begin{equation}\label{eq:A-ve-s}
    \mathcal{A}_\ve(s):=1-\frac{a}{R_\ve(s)\sqrt{1+s^2}}.
\end{equation}
The displaced initial condition yields $\mathcal{A}_\ve(0)=\varepsilon/(a+\varepsilon)>0$, in
contrast with the unperturbed value $\mathcal{A}(0)=0$.
This strict positivity is the whole
point of the perturbation, as the next lemma makes precise.

\begin{lemma}\label{lem:perturbed}
    There exist $\varepsilon_0>0$, $\delta>0$ and $\eta>0$ such that for every
    $\varepsilon\in(0,\varepsilon_0)$ the following hold.
    \begin{enumerate}
        \item[\rm(i)] The system \eqref{eq:sys-ve} has a unique smooth solution
        $s\mapsto(t_\ve(s),R_\ve(s))$ on $(-\delta,\delta)$, and $\mathcal{A}_\ve(s)>0$ there.
        \item[\rm(ii)] The map $s\mapsto t_\ve(s)$ is strictly increasing with smooth inverse
        $t\mapsto s_\ve(t)$, and $r_\ve:=R_\ve\circ s_\ve$ satisfies $r_\ve\geq a$.
        \item[\rm(iii)] The function $u_\ve(x',t):=\sqrt{r_\ve(t)^2-|x'|^2}$ is a smooth solution
        of \eqref{eq:ot-pde} on the ball $B_\eta:=B_\eta(0)\subset\R^2$, and the graph
        $\mathcal{M}_\ve:=\{(z,u_\ve(z)):z\in B_\eta\}$ is a smooth solution of \eqref{eq:slce}.
    \end{enumerate}
\end{lemma}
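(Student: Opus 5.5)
We will prove Lemma~\ref{lem:perturbed} by treating the perturbed system \eqref{eq:sys-ve} as a smooth family of initial value problems depending on the parameter $\varepsilon$, and applying the local theory uniformly in $\varepsilon$.

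\emph{Step 1: uniform existence.} For $n=2$ the right-hand side of \eqref{eq:sys-ve} is
\[
F(s,R)=\frac{a\bigl(1-a/(R\sqrt{1+s^2})\bigr)}{\bigl(\lambda-1+a/(R\sqrt{1+s^2})\bigr)(1+s^2)^{3/2}}.
\]
We will show $F$ is smooth on a compact neighbourhood $K=\{|s|\le 1,\ |R-a|\le a/2\}$. Here $R\sqrt{1+s^2}>0$, and the quantity $\lambda-\mathcal{A}$ is bounded below by $\lambda-1+a/(\tfrac32 a\sqrt2)>0$, since $\lambda=1/\cos^2\Theta>1$. Hence $F$ is bounded and Lipschitz on $K$. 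Picard--Lindel\"of then gives a common $\delta>0$ such that, for all $\varepsilon\in(0,\varepsilon_0)$ with $\varepsilon_0\le a/4$, the solution exists on $(-\delta,\delta)$, is unique, and stays in $K$. Smoothness follows because $F$ is smooth.

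\emph{Step 2: positivity and monotonicity.} Since $R_\varepsilon'=s\,t_\varepsilon'$, the key point is to show $t_\varepsilon'>0$. Suppose $\mathcal{A}_\varepsilon$ first vanishes at some $s_1\neq0$. On $(0,s_1)$ we have $t_\varepsilon'>0$, so $R_\varepsilon$ increases away from $s=0$ (for $s>0$; symmetrically for $s<0$). Then $R_\varepsilon\sqrt{1+s^2}>a+\varepsilon$, giving $\mathcal{A}_\varepsilon\ge\varepsilon/(a+\varepsilon)$ up to $s_1$, a contradiction. Hence $\mathcal{A}_\varepsilon>0$ on $(-\delta,\delta)$, which proves (i). It follows that $t_\varepsilon'>0$, so $t_\varepsilon$ is strictly increasing with smooth inverse $s_\varepsilon$ (inverse function theorem). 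Moreover $R_\varepsilon$ is minimized at $s=0$, so $r_\varepsilon\ge a+\varepsilon\ge a$, which proves (ii).

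\emph{Step 3: a uniform ball and the equation.} We need $\eta$ independent of $\varepsilon$, so that $(-\eta,\eta)\subset t_\varepsilon((-\delta,\delta))$. The intended route is continuous dependence on initial data. As $\varepsilon\to0$, we have $t_\varepsilon(\pm\delta/2)\to t(\pm\delta/2)$, and $\pm t(\pm\delta/2)>0$ by Lemma~\ref{lem:inverse}. Alternatively, directly: $\mathcal{A}_\varepsilon\ge\mathcal{A}_0$-type comparison, since larger $R$ gives larger $\mathcal{A}$. Shrinking $\varepsilon_0$, this yields $\eta>0$ with $\eta<\min\{T,a/8\}$ for all small $\varepsilon$, and we also take $\eta<a$ so that $u_\varepsilon>0$.

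Then $r_\varepsilon$ is smooth with $\dot r_\varepsilon=R_\varepsilon'/t_\varepsilon'=s_\varepsilon$, so $u_\varepsilon$ is smooth on $B_\eta$. Reversing the computation \eqref{s3:e1}--\eqref{eq:ode}: the first equation of \eqref{eq:sys-ve} says exactly $\mathcal{A}_\varepsilon(1+a\ddot r_\varepsilon/(1+\dot r_\varepsilon^2)^{3/2})=\lambda$, because $\ddot r_\varepsilon=1/t_\varepsilon'$. This is \eqref{eq:ode} for $n=2$, hence \eqref{eq:ot-pde} holds pointwise. Lemma~\ref{lem:ot-slce} then gives (iii).

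\emph{Main obstacle.} The one genuinely delicate point is the uniformity of $\eta$ in Step 3. The first thing to try is continuous dependence applied at the fixed values $s=\pm\delta/2$.
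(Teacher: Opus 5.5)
Your proposal is correct and follows the paper's proof essentially step for step: a Picard--Lindel\"of interval uniform in $\ve$, then positivity of $\mathcal{A}_\ve$, then monotonicity of $t_\ve$ and $R_\ve$ (so $r_\ve\ge a+\ve$), then continuous dependence at fixed $s$-values to get an $\ve$-independent $\eta$, and finally $\dot r_\ve=s_\ve$, $\ddot r_\ve=1/t_\ve'$ to recover \eqref{eq:ode} and invoke Lemma~\ref{lem:ot-slce}. The only real difference is how you prove $\mathcal{A}_\ve>0$: you use a first-zero bootstrap through $R_\ve\ge a+\ve$, whereas the paper computes $\mathcal{A}_\ve'$ explicitly and shows $\operatorname{sgn}\mathcal{A}_\ve'=\operatorname{sgn}s$; both give $\mathcal{A}_\ve\ge\ve/(a+\ve)$, and your version, together with the explicit box $K$, never assumes the positivity it is proving and makes the uniformity of $\delta$ in $\ve$ transparent.
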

\begin{proof}
    {\it (i).} Since $\mathcal{A}_\ve(0)=\varepsilon/(a+\varepsilon)>0$, the right-hand side of
    \eqref{eq:sys-ve} is smooth near $s=0$, so the Picard--Lindel\"of theorem yields a unique
    smooth solution on some interval $(-\delta,\delta)$. Shrinking $\varepsilon_0$ and $\delta$,
    we may assume $0<\mathcal{A}_\ve(s)<\tfrac12$, and hence $\mathcal{A}_\ve(s)^{n-1}<1<\lambda$,
    for all $\varepsilon\in(0,\varepsilon_0)$ and $|s|<\delta$, so the system stays
    non-degenerate. Differentiating \eqref{eq:A-ve-s} and using the second equation of
    \eqref{eq:sys-ve},
    \[
        \mathcal{A}_\ve'(s)
        =\frac{a}{R_\ve\sqrt{1+s^2}}\Big(\frac{R_\ve'}{R_\ve}+\frac{s}{1+s^2}\Big)
        =\frac{s\,(1-\mathcal{A}_\ve)(\lambda-\mathcal{A}_\ve^2)}{(1+s^2)(\lambda-\mathcal{A}_\ve)},
    \]
    so that $\operatorname{sgn}\mathcal{A}_\ve'(s)=\operatorname{sgn}(s)$. Together with
    $\mathcal{A}_\ve(0)>0$ this gives $\mathcal{A}_\ve>0$ on $(-\delta,\delta)$.

    {\it (ii).} By (i) and the first equation of \eqref{eq:sys-ve}, $t_\ve'(s)>0$ for all
    $|s|<\delta$, so $s\mapsto t_\ve(s)$ is strictly increasing and has a smooth inverse
    $t\mapsto s_\ve(t)$. The second equation gives $R_\ve'(s)=s\,t_\ve'(s)$, which has the sign
    of $s$; hence $R_\ve$ attains its minimum at $s=0$ and $R_\ve\geq R_\ve(0)=a+\varepsilon\geq a$,
    so $r_\ve=R_\ve\circ s_\ve\geq a$.

    {\it (iii).} By (i) and (ii), $R_\ve$ and $s_\ve$ are smooth, hence so is
    $r_\ve=R_\ve\circ s_\ve\geq a$. By continuous dependence on the initial data,
    $t_\ve(\pm\delta)\to t_0(\pm\delta)\neq0$ as $\varepsilon\to0$, where $t_0$ is the
    unperturbed solution of \eqref{eq:ode-sys}; hence, after shrinking $\varepsilon_0$, we may fix
    a single $\eta\in(0,a)$ with $[-\eta,\eta]\subset(t_\ve(-\delta),t_\ve(\delta))$ for every
    $\varepsilon\in(0,\varepsilon_0)$. On $B_\eta:=B_\eta(0)\subset\R^2$ we then have
    $r_\ve(t)^2-|x'|^2\geq a^2-\eta^2>0$, so $u_\ve(x',t):=\sqrt{r_\ve(t)^2-|x'|^2}$ is
    well-defined and smooth.

    It remains to check that $u_\ve$ solves \eqref{eq:ot-pde}. Differentiating
    $r_\ve=R_\ve\circ s_\ve$ and using the second equation of \eqref{eq:sys-ve} in the form
    $R_\ve'=s\,t_\ve'$ together with $\dot s_\ve=1/t_\ve'$ gives $\dot r_\ve=s$, exactly as in
    Lemma~\ref{lem:C1}; here $s_\ve$ is smooth because $t_\ve'>0$ throughout $(-\delta,\delta)$.
    Granted $\dot r_\ve=s$, the first equation of \eqref{eq:sys-ve} is the $s$-parametrized form
    of the reduced equation \eqref{eq:ode} for $r_\ve$.
    Thus $r_\ve$ satisfies \eqref{eq:ode},
    which by \eqref{eq:DT_u} is precisely $\det DT_{u_\ve}=A^{n-1}\dot B=\lambda$. Hence $u_\ve$ is
    a smooth solution of \eqref{eq:ot-pde} with $a=\tan\Theta$ and $\lambda=1/\cos^2\Theta$, and
    the assertion about $\mathcal{M}_\ve$ then follows from Lemma~\ref{lem:ot-slce}.
\end{proof}

Pass to the limit $\varepsilon\to0$. By continuous dependence of the solutions of
\eqref{eq:sys-ve} on the initial data, $(t_\ve,R_\ve)$ converges, as $\varepsilon\to0$ and
uniformly on compact subsets of $(-\delta,\delta)$, to the solution $(t_0,R_0)$ of
\eqref{eq:ode-sys} with $n=2$. By Lemma~\ref{lem:inverse}, $s\mapsto t_0(s)$ is strictly
increasing; let $s_0$ be its inverse, $r_0:=R_0\circ s_0$, and
$u_0(x',t):=\sqrt{r_0(t)^2-|x'|^2}$ on $B_\eta$. Then $r_\ve\to r_0$ and consequently
$u_\ve\to u_0$ in $C^0(B_\eta)$. Applying Lemma~\ref{lem:reg} with $n=2$ gives
$u_0\in C^{1,1/3}(B_\eta)$ while $u_0\notin C^{1,\beta}(B_\eta)$ for every $\beta>1/3$.

Finally, set $\varepsilon_j:=\varepsilon_0/2^{\,j}$ and $u_j:=u_{\varepsilon_j}$ for $j\geq1$, and
$u_\infty:=u_0$. By Lemma~\ref{lem:perturbed} each $u_j$ is a smooth graphical solution of
\eqref{eq:slce}, and the preceding paragraph shows $u_j\to u_\infty$ in $C^0(B_\eta)$ with
$u_\infty\in C^{1,1/3}(B_\eta)\setminus\bigcup_{\beta>1/3}C^{1,\beta}(B_\eta)$. Thus
$\{u_j\}_{j=1}^\infty$ and $u_\infty$ have all the properties asserted in Theorem~\ref{thm:B},
and the proof is complete.

\begin{remark}
We present a geometric perspective through the parallel surface transform.\label{rk:geometry}
For the relativistic cost~\eqref{eq:cost}, a $c$-support is the lower cap of a sphere of
radius $a$ tangent from below to the graph
$\mathcal{M}=\{(z,u(z))\}\subset\R^3$. Thus $T_u(z)$ is the horizontal projection of the
center of that sphere, or equivalently the projection of the parallel surface
\[
    \mathcal{M}_a:=\{p+a\,\nu(p):p\in\mathcal{M}\},\qquad a=\tan\Theta,
\]
where $\nu$ is the unit normal oriented toward the supporting spheres. With
$\lambda=1/\cos^2\Theta$, equation~\eqref{eq:ot-pde} says that $\mathcal{M}_a$ has constant
Gaussian curvature, while Lemma~\ref{lem:ot-slce} rewrites the same condition as
\eqref{eq:slce} on $\mathcal{M}$. In our sequence the offsets are rotationally symmetric
constant-Gauss-curvature surfaces. The limit degenerates when $r(0)=a$, so the parallel surface
collapses onto the axis at $t=0$.
\end{remark}

\begin{note}
After completion of this manuscript, the author became aware of independent related work
by Qiu and Tao~\cite{QT26}. The present paper obtains the two-dimensional
$C^{1,1/3}$ singular limit from an all-dimensional singular profile for the relativistic
heat cost. Qiu and Tao obtain the $C^{1,1/3}$ behavior through a parallel-surface
construction.
\end{note}

\end{document}